


\documentclass[11pt,reqno,a4paper]{amsart}
\usepackage[toc,page]{appendix}


\usepackage[utf8]{inputenc}
\usepackage{esint}
\usepackage{MnSymbol}
\usepackage{enumitem}
\usepackage[english]{babel}
\usepackage{amsmath}
\usepackage{mathtools}
\usepackage{thm-restate}
\usepackage{comment}
\usepackage{bm}
\usepackage{tikz}
\usetikzlibrary{patterns}
\usetikzlibrary{arrows.meta}

\usepackage{xcolor}



\newtheorem{theorem}{Theorem}
\newtheorem*{theorem*}{Theorem}
\newtheorem{proposition}{Proposition}[section]
\newtheorem{lemma}[proposition]{Lemma}

\theoremstyle{definition}
\newtheorem{definition}[proposition]{Definition}
\newtheorem{remark}[proposition]{Remark}

\numberwithin{equation}{section}


\usepackage[
colorlinks,
pdfpagelabels,
pdfstartview = FitH,
bookmarksopen = true,
bookmarksnumbered = true,
linkcolor = blue,
plainpages = false,
hypertexnames = false,
citecolor = red] {hyperref}

\hypersetup{
linktoc=page
}


\usepackage[capitalize]{cleveref}
\crefname{enumi}{}{}
\crefname{equation}{}{}

%

\setcounter{tocdepth}{2}
\makeatletter
\def\@tocline#1#2#3#4#5#6#7{\relax
\ifnum #1>\c@tocdepth 
\else
\par \addpenalty\@secpenalty\addvspace{#2}%
\begingroup \hyphenpenalty\@M
\@ifempty{#4}{%
\@tempdima\csname r@tocindent\number#1\endcsname\relax
}{%
\@tempdima#4\relax
}%
\parindent\z@ \leftskip#3\relax \advance\leftskip\@tempdima\relax
\rightskip\@pnumwidth plus4em \parfillskip-\@pnumwidth
#5\leavevmode\hskip-\@tempdima
\ifcase #1
\or\or \hskip 1em \or \hskip 2em \else \hskip 3em \fi%
#6\nobreak\relax
\dotfill\hbox to\@pnumwidth{\@tocpagenum{#7}}\par
\nobreak
\endgroup
\fi}
\makeatother


\newcommand{\R}{\mathbb{R}}

\newcommand{\Ha}{\mathcal{H}}
\newcommand{\F}{\mathcal{F}}

\newcommand{\beq}{\begin{equation}}
\newcommand{\eeq}{\end{equation}}
\newcommand{\dist}{{\rm dist}}
\newcommand{\vphi}{\varphi} 
\newcommand{\la}{\langle}
\newcommand{\ra}{\rangle}
\newcommand{\Div}{\operatorname{div}}
\newcommand{\pa}{\partial}
\newcommand{\medint}{-\kern -,375cm\int}
\newcommand{\medintinrigo}{-\kern -,315cm\int}

\newcommand{\dx}{\, {\rm d} x}

\newcommand{\de}{\, {\rm d}}


\allowdisplaybreaks


\setcounter{tocdepth}{1}
\begin{document}

\title[Flat Flow]{Flat flow solution to the mean curvature flow with volume constraint}

\author[Julin]{Vesa Julin}
\address{Vesa Julin,
Department of Mathematics and Statistics,
University of Jyv\"askyl\"a,
P.O. Box 35,
40014 Jyv\"askyl\"a,
Finland}
\email{\color{blue} vesa.julin@jyu.fi}

\keywords{}

\subjclass{}
\date{\today}

\begin{abstract}
In this paper I will revisit the construction of a global weak solution to the volume preserving mean curvature flow via discrete minimizing movement scheme by Mugnai-Seis-Spadaro \cite{MSS}. This method is based on the gradient flow approach due to Almgren-Taylor-Wang \cite{ATW} and Luckhaus-Str\"urzenhecker \cite{LS} and my aim is to replace the volume penalization by implementing  the volume constraint directly in the discrete scheme, which from practical point of view is perhaps more natural. A technical novelty is the proof of the density estimate which is based on  the second variation condition of the energy. 
\end{abstract}

\maketitle
\setcounter{tocdepth}{2}

\section{Introduction}

A smooth family of set  $(E_t)_{t \geq 0}$ is said to evolve according to volume preserving mean curvature flow if the normal velocity $V_t$ is proportional to the mean curvature  $H_{E_t}$ as
\beq
\label{the flow}
V_t = -(H_{E_t} - \bar H_{E_t}) \qquad \text{on }\, \pa E_t, 
\eeq
where  $\bar H_{E_t} = \fint_{\pa E_t} H_{E_t} \, d \Ha^n$.  Such a geometric equation  has been
proposed in the physical literature to  model coarsening phenomena, where the system consisting on several subdomains  evolve such that it decreases the interfacial area while keeping  the total volume unchanged \cite{CRCT, MuSe}. From purely mathematical point of view the equation \eqref{the flow} can be seen as the $L^2$-gradient flow of the surface area under the volume constraint \cite{MuSe}. One has to be careful in this  interpration  as  the Riemannian distance  between two sets is in general degenerate \cite{MM}.  In order to overcome this one may use the idea due to Almgren-Taylor-Wang \cite{ATW} and Luckhaus-Str\"urzenhecker \cite{LS} and to view  \eqref{the flow} as the gradient flow of the surface area  with respect to a different, non-degerate, distance. Using the gradient flow structure, one may then construct a discrete-in-time approximation to the solution of \eqref{the flow} via the Euler implicit method, also known as the minimizing movements scheme. By letting the time step to zero, one then obtains a candidate for a weak solution of  \eqref{the flow}  called \emph{flat flow}, as the convergence is measured in terms of ''flat norm''.  This method is implemented to the volme preserving setting in \cite{MSS}.

 In  \cite{MSS} the authors observe that from technical point of view it is easier to replace the volume constraint of the problem with a volume penalization, as this simplifies certain regularity issues at the level of the discrete approximation. My aim here is to show that one may construct the flat flow solution to \eqref{the flow}  by implementing the volume constraint in the   minimizing movements scheme directly and thus avoid the volume penalization.    

Let me quickly recall the  discrete minimizing movements scheme for \eqref{the flow}. One defines a sequence of sets $(E_k^h)_{k}$, with fixed time step $h>0$, iteratively such that $E_0^h = E_0$, where $E_0$ is the given initial set, and $E_{k+1}^h$ is a minimizer of the functional
\[
P(E) + \frac{1}{h} \int_{E} \bar d_{E_k^h} \, dx \qquad \text{under the constraint } \, |E| = |E_k^h|. 
\]  
Here $P(E) $ denotes the perimeter (generalized surface area) of the set $E$ and  $\bar d_{F} $ is the signed distance function of  the set $F$ (see next section). One then defined an approximative flat flow solution to \eqref{the flow} $(E_t^h)_{ t\geq 0}$ from the previous sequence by $E_t^h = E_k^h$ for $t \in [kh,(k+1)h)$. Any cluster point of $(E_t^h)_{ t\geq 0}$ is then defined as flat flow solution to \eqref{the flow}.   The advantage is that such a solution is defined for all times and for rough initial data. The main result in the paper is the existence of a flat flow solution.

\begin{theorem}
\label{thm1}
Assume that $E_0 \subset \R^{n+1}$  is an open and bounded set with finite perimeter and let $(E_t^h)_{t \geq 0}$ be an approximative flat flow solution to \eqref{the flow} stating from $E_0$ (see Definition \ref{def:flatflow}). Then there exists a family of bounded sets  of finite perimeter $(E_t)_{t \geq 0}$ and a subsequence $h_k \to 0$ such that 
\[
\lim_{h_k \to 0} |E_t^{h_k} \Delta E_t| = 0 \qquad \text{for a.e. }  \, t \geq 0 
\]
and for every $0 <t <s$  it holds $|E_t| = |E_0|$, $P(E_t) \leq P(E_0)$ and 
\[
|E_t \Delta E_s| \leq C \sqrt{s-t},
\]
where $C$ depends on the dimension and on $E_0$. Moreover, if the initial set $E_0$ is $C^{1,1}$-regular, then any such limit flow $(E_t)_{t \geq 0}$ agrees with the unique classical solution of \eqref{the flow} as long as the latter exists.  
\end{theorem}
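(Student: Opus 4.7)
The plan is to implement an Almgren--Taylor--Wang-type minimizing movements scheme with the volume constraint kept hard, in four stages: well-posedness of the single discrete step, dissipation and the $\sqrt{s-t}$ Hölder-in-time estimate, uniform density estimates for the discrete minimizers (the main technical obstacle), and finally compactness and comparison with the classical flow.

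For each $h>0$ and given $E_k^h$, I minimize $\mathcal{F}_k(E) := P(E) + h^{-1} \int_E \bar d_{E_k^h}\,\dx$ over $\{E : |E|=|E_k^h|\}$. Existence follows from the direct method: minimizing sequences have uniformly bounded perimeter (test with $E = E_k^h$), the linear growth of $\bar d_{E_k^h}$ at infinity confines mass to a fixed ball, and $BV$-compactness yields an $L^1$-limit on which perimeter is lower semicontinuous, the distance term is continuous, and the volume constraint is preserved. Testing the minimizer $E_{k+1}^h$ against $E_k^h$ produces the dissipation
\[
P(E_{k+1}^h) + \frac{1}{h} \int_{E_{k+1}^h \Delta E_k^h} |\bar d_{E_k^h}|\,\dx \leq P(E_k^h);
\]
together with the coarea-type estimate $\int_A |\bar d_{E_k^h}|\,\dx \geq c\, |A|^2/P(E_k^h)$, telescoping and Cauchy--Schwarz give $|E_s^h \Delta E_t^h| \leq C\sqrt{s-t}$ with constant controlled by $P(E_0)$.

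The main obstacle is a uniform density estimate: constants $r_0, c > 0$ independent of $k, h$ such that
\[
\min\bigset{|E_{k+1}^h \cap B_r(x)|,\; |B_r(x) \setminus E_{k+1}^h|} \geq c\, r^{n+1} \quad \text{for every } x \in \pa E_{k+1}^h, \; r \leq r_0.
\]
In the penalized setting of \cite{MSS} these follow from a De~Giorgi-type truncation, but the hard volume constraint rules out simple truncation and the Lagrange multiplier $\lambda_{k+1}^h$ in the Euler--Lagrange equation $H_{E_{k+1}^h} = -h^{-1} \bar d_{E_k^h} + \lambda_{k+1}^h$ must be controlled first. Following the route announced in the abstract, I would test the second variation inequality for $\mathcal{F}_k$ at $E_{k+1}^h$ against a carefully designed volume-preserving normal perturbation: a local ``cap-removal'' bump at a point of hypothetically small density, balanced by a spread-out bump on a macroscopic portion of $\pa E_{k+1}^h$ whose availability is secured by the perimeter and diameter bounds. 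Non-negativity of $\delta^2 \mathcal{F}_k$, combined with the Euler--Lagrange equation, turns the low-density hypothesis into a contradiction and at the same time yields an a priori bound on $\lambda_{k+1}^h$.

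With the density estimate in hand, the $\sqrt{s-t}$ bound and initial boundedness of $E_0$ give uniform boundedness of $E_t^h$ on bounded time intervals, and a Helly-type selection extracts $h_k \to 0$ with $E_t^{h_k} \to E_t$ in $L^1$ for a.e. $t$. Volume preservation passes to the limit by $L^1$-continuity, perimeter monotonicity by lower semicontinuity, and the Hölder estimate directly. For the consistency claim, when $E_0 \in C^{1,1}$ the classical flow $(E_t^{cl})$ exists on a short interval and is smooth; I would compare it to the discrete flow by tracking $t \mapsto |E_t^h \Delta E_t^{cl}|$, using that $-h^{-1} \bar d_{E_k^h}|_{\pa E_{k+1}^h}$ plays the role of the normal velocity, $\lambda_{k+1}^h$ converges to $\bar H_{E_t^{cl}}$, and the Euler--Lagrange equation absorbs the discretization error into a Grönwall-type estimate that forces the limit to coincide with $E_t^{cl}$ throughout its life-span.
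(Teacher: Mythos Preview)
Your outline has the right shape but contains two genuine gaps and one misformulation that matters.

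First, the density estimate you aim for is stated at a fixed scale $r_0$ independent of $h$; this is both stronger than what can be expected and stronger than what is needed. The mean curvature of $E_{k+1}^h$ is only bounded by $C/\sqrt{h}$, so at scales $r\gg\sqrt{h}$ the set can be very thin and no uniform lower density survives. What the paper actually proves (Proposition~\ref{prop:density}) is the sharp estimate at scales $r\le C_0\sqrt{h}$, and this is exactly what the interpolation Lemma~\ref{lem:interpol} requires. Relatedly, your ``coarea-type estimate'' $\int_A|\bar d_{E_k^h}|\ge c|A|^2/P(E_k^h)$ is not an a priori fact: it is precisely the content of the interpolation lemma, and its proof uses the density estimate for $E_k^h$ at scales up to $\sqrt{h}$. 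So the logical order must be: density first, then interpolation, then the H\"older bound.

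Second, the claim that the $\sqrt{s-t}$ bound together with boundedness of $E_0$ yields uniform boundedness of $E_t^h$ is not correct: control of $|E_t^h\Delta E_0|$ does not prevent a small piece of $E_t^h$ from drifting far away. In the paper this is handled differently (Lemma~\ref{lem:bounded}): one tracks the circumscribed radius $r_t$, observes that at the farthest boundary point $H_{E_t^h}\ge 0$, and uses the Euler--Lagrange equation to get $r_t\le r_{t-h}+h|\lambda_{t,h}|$, so the growth is controlled by the time-integrated Lagrange multiplier, which in turn is bounded via Lemma~\ref{lem:dissipation}.

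Finally, the second-variation strategy in the paper is not the ``local cap-removal balanced by a spread-out bump'' you describe. Instead one tests \eqref{eq:2ndVar} with $\varphi=\zeta-\bar\zeta$ for a macroscopic cut-off $\zeta$ to obtain an $L^2$ bound on $H_E$ (Lemma~\ref{lem:curvaturebound1}); this feeds into a covering argument that bounds the Lagrange multiplier (Lemma~\ref{lem:lagrangebound}); and the $L^\infty$ bound on $\bar d_F|_{\partial E}$ is then obtained by an \emph{iterated} application of the second variation with concentric annular cut-offs at scale $\sqrt{h}$, yielding a geometric growth inequality \eqref{eq:curvbound-3} that contradicts the upper perimeter bound of Lemma~\ref{lem:density2}. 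Your proposed test function is too vague to see whether it would produce the needed $h$-sharp constants.
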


The above theorem thus provides the existence of a flat flow solution and guarantees that this notion is consistence with the classical solution when the initial set is regular enough. The disadvantage of the flat flow is that it is not clear if it provides a solution to the original equation \eqref{the flow} in any weak sense.    However,    the conditional result in the spirit of  Luckhaus-Str\"urzenhecker \cite{LS} holds also in this case. 
\begin{theorem}
\label{thm2}
Let  $(E_t^h)_{t \geq 0}$ be an approximative flat flow solution to \eqref{the flow} and let $(E_t^{h_k})_{t \geq 0}$ be the converging subsequence in Theorem \ref{thm1}. Assume further that it holds 
\[
\lim_{h_k \to 0} P(E_t^{h_k}) = P(E_t) \qquad \text{for a.e. }\,  t \geq 0. 
\]
Then for $n \leq 6$ the flat flow $(E_t)_{t \geq 0}$ is a distributional solution to \eqref{the flow} (see Definition \ref{def:distributional}).   
\end{theorem}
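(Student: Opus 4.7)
The plan is to derive the Euler--Lagrange equation satisfied by each discrete minimizer $E_{k+1}^h$, establish uniform bounds on the resulting Lagrange multipliers and on the discrete velocity, and then pass to the limit $h_k \to 0$ using the perimeter convergence hypothesis. Since $E_{k+1}^h$ minimizes $P(E) + \tfrac{1}{h}\int_E \bar d_{E_k^h} \dx$ under the constraint $|E| = |E_0|$, the first variation along $X \in C_c^1(\R^{n+1}; \R^{n+1})$ produces a Lagrange multiplier $\lambda_{k+1}^h \in \R$ satisfying
$$
\int_{\pa^* E_{k+1}^h} \Div_\tau X \, d\Ha^n = \int_{\pa^* E_{k+1}^h} \Bigl( \lambda_{k+1}^h - \frac{\bar d_{E_k^h}}{h} \Bigr) \la X, \nu \ra \, d\Ha^n.
$$
Multiplying by a test function $\vphi(t) \in C_c^1((0,\infty))$ and summing over $k$ through the piecewise-constant interpolation $E_t^h$ assembles a discrete distributional identity for the approximative flat flow.

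The second step is to obtain uniform bounds. The standard energy inequality for the minimizing movements scheme yields
$$
\sum_k \frac{1}{h} \int_{E_{k+1}^h \Delta E_k^h} |\bar d_{E_k^h}| \dx \leq P(E_0),
$$
which provides an $L^2$-in-time bound on the discrete velocity $\bar d_{E_k^h}/h$ restricted to $\pa^* E_{k+1}^h$. For the Lagrange multipliers, one tests the Euler--Lagrange identity against a vector field $X$ for which $\int_{\pa^* E_{k+1}^h} \la X, \nu \ra \, d\Ha^n$ is bounded below uniformly in $h$; this yields control of $\lambda_{k+1}^h$ in $L^2$ in time, and the density estimate established earlier in the paper guarantees that such an $X$ can be constructed.

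To pass to the limit, I would multiply the Euler--Lagrange identity by $\vphi$, integrate in $t$, and let $h_k \to 0$. The hypothesis $P(E_t^{h_k}) \to P(E_t)$ for a.e.\ $t$, combined with $|E_t^{h_k} \Delta E_t| \to 0$ from Theorem \ref{thm1}, upgrades to convergence of the reduced boundaries $\pa^* E_t^{h_k}$ to $\pa^* E_t$ as integer rectifiable varifolds, so that
$$
\int_{\pa^* E_t^{h_k}} \Div_\tau X \, d\Ha^n \longrightarrow \int_{\pa^* E_t} \Div_\tau X \, d\Ha^n \qquad \text{for a.e. } t.
$$
Combining the $L^2$ bound on the discrete velocity with the varifold convergence identifies its weak limit on $\pa^* E_t$ with the distributional normal velocity $V_t$, while testing against further well-chosen vector fields identifies the weak limit of $\lambda_{k+1}^h$ with $\bar H_{E_t}$. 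Piecing these convergences together recovers the distributional formulation in Definition \ref{def:distributional}.

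The main obstacle is the limit in the first-variation term, which is only lower semicontinuous under $L^1$-convergence of sets. The assumption $\lim P(E_t^{h_k}) = P(E_t)$ is precisely what upgrades $L^1$-convergence to varifold convergence, which is the heart of the Luckhaus--St\"urzenhecker strategy. The dimensional restriction $n \leq 6$ enters through De Giorgi--Federer regularity: for $n \leq 6$ the reduced boundary of each almost-minimizer coincides with its topological boundary, eliminating the singular set that would otherwise obstruct both the Euler--Lagrange computation on $E_{k+1}^h$ and the identification of the limit varifold with a smooth hypersurface.
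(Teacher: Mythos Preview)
Your outline is essentially the Luckhaus--St\"urzenhecker/Mugnai--Seis--Spadaro strategy, which is exactly what the paper invokes: its proof consists of a direct citation to \cite[Theorem 2.3]{MSS} together with the observation that Proposition~\ref{coro:minima} supplies the required $(Ch^{-1/2}, c\sqrt{h})$-minimality of the discrete sets $E_t^h$. So your approach and the paper's coincide.

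One point deserves tightening. You write that the bulk dissipation inequality
\[
\sum_k \frac{1}{h} \int_{E_{k+1}^h \Delta E_k^h} |\bar d_{E_k^h}|\,dx \leq P(E_0)
\]
``provides an $L^2$-in-time bound on the discrete velocity $\bar d_{E_k^h}/h$ restricted to $\partial^* E_{k+1}^h$.'' This does not follow directly: the left-hand side is a volume integral, while the quantity you need is $\int_{\partial^* E_{k+1}^h} (\bar d_{E_k^h}/h)^2\,d\Ha^n$. The passage from one to the other is precisely where the sharp density estimates of Proposition~\ref{prop:density} enter (see the proof of Lemma~\ref{lem:dissipation}, which carries out this covering argument). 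You invoke the density estimate only for the Lagrange-multiplier bound, but it is equally indispensable here; without it the velocity control---and hence the entire limit passage---would fail.
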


One may also try to view the equation \eqref{the flow} as a mean curvature flow with  forcing, where the forcing term depends on the flow itself. In this way one may try use different methods to construct a solution to the equation see e.g. \cite{BCC, BS} .     I also refer the recent work  \cite{Laux} for a weak-strong uniqueness result related to \eqref{the flow}.

As I already mentioned, the flat flow is defined for all times and one may study its asymptotical behavior. Indeed, by using the metods from  \cite{JuMoPoSpa, JN} one may deduce the convergence of the flow in low dimensions.

\begin{remark}
\label{rem:main}
Assume that $E_0 \subset \R^{n+1}$, with $n \leq 2$,   is as in Theorem \ref{thm1} and let $(E_t)_{t \geq 0}$ be a limit flat flow.  When $n=1$, the flow $E_t$ converges to  a union of disjoint balls exponentially fast and when $n=2$ the flow converges to a union of disjoint balls up to a possible translation of the components. 
\end{remark}

The main technical  challenge in proving  Theorem \ref{thm1} is to obtain the sharp density estimate for the discrete flow. This is also the main technical novelty of this paper. There are several techniques to deal with the volume constraint in variational problems, e.g. by using  the argument from \cite{Alm} (see also \cite[Lemma 17.21]{Ma}) or from \cite{GMT} (see also \cite{APP, Gru}). However, due to the presence of the dissipation term in the energy it is not obvious how to apply  these arguments  in order to obtain sharp density estimates in terms of the time step $h$.  I will use an argument which is based on the second variation condition of the energy to prove the density estimate in  Proposition \ref{prop:density}. After this the proof of Theorem \ref{thm1} follows exactly as in \cite{LS, MSS} and the consistency follows almost directly using the argument in \cite{JN2}. The proof also provides the dissipation inequality and therefore the results in \cite{JuMoPoSpa, JN} hold and one obtains the result stated in Remark \ref{rem:main}. Finally I would like to point out  that this article is not self-consistence as several arguments are well-known, in particular,  in Section 4.

\section{Preliminaries}

In this section I will briefly introduce the notation, the definition of the flat flow solution and recall some of its basic properties. 

Given a set $E \subset \R^{n+1}$   the distance function $\dist(\cdot, E) :  \R^{n+1}  \to [0,\infty)$ is defined, as usual, as $\dist(x, E)  := \inf_{y \in E} |x - y|$ and  denote the signed distance function by $\bar  d_E : \R^{n+1} \to \R$, 
\[
 \bar d_E(x) := \begin{cases} - \dist(x, \pa E)  , \,\,&\text{for }\, x \in E \\
 \dist(x, \pa E) , \,\, &\text{for }\, x \in \R^{n+1} \setminus E.  \end{cases} 
\]
Then clearly it holds $ \dist(\cdot , \pa E)  = |\bar d_E|$. I denote the ball with radius $r$ centered at $x$ by $B_r(x)$ and by $B_r$ if it is centered at the origin.

 For a measurable set $E\subset \R^{n+1}$ the perimeter in an open set $U \subset \R^{n+1}$ is defined by
\[
P(E, U) := \sup \Big{\{} \int_E \Div X \, dx : X \in C_0^1(U,\R^{n+1}) , \, \| X\|_{L^\infty} \leq 1 \Big{\}}
\]
and write $P(E) = P(E,\R^{n+1})$. If $P(E) < \infty$, then  $E$ is called  a set of finite perimeter. For an introduction to the topic I refer to  \cite{Ma}. The reduced boundary of a set of finite perimeter $E$ is denoted by  $\pa^* E$ and the generalized unit outer normal by $\nu_E$. Note that it holds $P(E, U)  = \Ha^n(\pa^* E \cap U)$ for open sets $U$. Recall also that if $E$ is regular enough, say with Lipschitz boundary, then $P(E) = \Ha^n(\pa E)$. For a given vector field $X \in C^1(\R^{n+1},\R^{n+1})$ and a set of finite  perimeter $E$  denote the tangential divergence on $\pa E^*$ as $\Div_\tau X  = \Div X - \la D X \nu_{E} , \nu_E \ra $. The distributional mean curvature  $H_E \in L^1(\pa^* E, \R)$ is defined via the divergence thereon such that for every test vector field  $X \in C_0^1(\R^{n+1},\R^{n+1})$ it holds 
\[
\int_{\pa^* E} \Div_\tau X  \,  \de  \Ha^n = \int_{\pa^* E} H_E \la X, \nu_E \ra  \,  \de  \Ha^n .
\]

I will consider a flat flow solution to \eqref{the flow} in the spirit of Almgren-Taylor-Wang \cite{ATW} and Luckhaus-St\"urzenhecker \cite{LS}. To this this aim for a fixed  $h \in (0,1)$ and a given (open) set $F \subset \R^{n+1}$ I define the functional 
\beq 
\label{def:functional}
\F_h(E,F) = P(E) + \frac{1}{h}\int_{E} \bar d_F \dx.
\eeq

The flat flow solution is defined analogously as in \cite{MSS}.

\begin{definition}
\label{def:flatflow}
Let $E_0 \subset \R^{n+1}$ be an open and bounded set of finite perimeter and fix $h \in (0,1)$. Define the sequence of sets $(E_k^h)_{k=0}^\infty$ iteratively as $E_0^h = E_0$ and $E_{k+1}^h$ is a minimizer of the problem
\[
\min \Big{\{} \F_h(E,E_{k}^h)   : |E| = |E_0|  \Big{\}}.
\]
Moreover, define an approximative flat flow $(E_t^h)_{t \geq 0}$ for \eqref{the flow} starting from $E_0$ as
\[
E_t^h  =E_k^h \qquad \text{for } \, t \in [kh,(k+1)h).
\]
\end{definition}

One has to be carefull in the definition of the functional \eqref{def:functional} if the set $F$ is merely a set of finite perimeter as its value depends on the choice of the representative  of $F$. One may overcome this by choosing a proper representative of the set $F$. In my case this is not necessary, as  the regularity theorem below implies that  one may in fact assume the sets $E_k^h $ to be open. The difference in the \cref{def:flatflow} to the scheme in \cite{MSS} is that here  the minimizing problem is under volume constraint. On one hand this makes the minimization  problem more natural, but on the other hand, it makes the quantitative density estimates more difficult to prove.

For a given open and bounded set $F \subset \R^{n+1}$ consider the minimization problem 
\beq 
\label{def:minprob}
\min \Big{\{} \F_h(E,F)  : |E| = |F|  \Big{\}},
\eeq
where $\F_h(\cdot,F) $ is defined in \eqref{def:functional}.  One may use an argument similar to \cite{Gru} or \cite[Lemma 17.21]{Ma} to get rid of  the volume constraint in \eqref{def:minprob} and  deduce that a minimizer of  \eqref{def:minprob}  is a minimizer also for 
\beq
\label{def:minprob-const}
\min \Big{\{} \F_h(E,F)  + \tilde \Lambda \big| |E| - |F| \big| \Big{\}},
\eeq
when $\tilde \Lambda$ is chosen large. Note that the constant  $\tilde \Lambda$ may have unoptimal dependence on $E$ and on $h$. However,  the property \eqref{def:minprob-const} is enough   deduce qualitative regularity properties since it implies that the minimizer inherits the regularity from the theory of the perimeter minimizers \cite{Ma}. One may also write the Euler-Lagrange equation and by standard calculations (see e.g. \cite{AFM}) we have the second variation condition. We state this in the following proposition. 
\begin{proposition}
\label{prop:regularity}
Let $F \subset \R^{n+1}$ be an open and bounded set, fix $h \in (0,1)$ and  let $E$  be  a minimizer of \eqref{def:minprob}. Then $E$ can be chosen to be open, which topological boundary is $C^{2,\alpha}$-regular up to a relatively closed singular set which Hausdorff dimension is at most $n-7$. In fact, the regular part is exactly the reduced boundary $\pa^* E$. 

The Euler-Lagrange equation
\beq
\label{eq:euler}
\frac{d_{F}}{h} = - H_E + \lambda,
\eeq  
where $\lambda \in \R$ is the Lagrange-multiplier, holds point wise on $\pa^* E$  and in a distributional sense on $\pa E$. The quadratic form associated with the second variation of the energy is non-negative, i.e., for all $\vphi \in H^1(\pa^* E)$ with $\int_{\pa^* E} \vphi \de \Ha^n = 0$ it holds
   \beq
\label{eq:2ndVar}
\int_{\pa^* E} |\nabla_\tau \vphi|^2 - |B_E|^2 \vphi^2 \, \de \Ha^n + \frac{1}{h}   \int_{\pa^* E} \la\nabla \bar d_F, \nu_E\ra  \vphi^2 \, \de \Ha^n \geq 0, 
\eeq  
where $B_E(x)$ denotes the second fundamental form  at $x \in \pa^* E$. 
\end{proposition}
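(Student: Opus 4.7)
The plan is to reduce the constrained problem \eqref{def:minprob} to an unconstrained penalized one so that the classical regularity and variation theory applies, and then derive \eqref{eq:euler} and \eqref{eq:2ndVar} by the standard first- and second-order calculus of variations on the smooth piece $\pa^* E$.

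\emph{Step 1 (removing the volume constraint).} Following Gr\"uter \cite{Gru} (cf.\ \cite[Lemma 17.21]{Ma}), I would first show that $E$ minimizes \eqref{def:minprob-const} for some (possibly large, non-optimal) constant $\tilde\Lambda = \tilde\Lambda(E,F,h)$. Pick $x_0\in\pa^*E$ and a vector field $X\in C_c^\infty(B_r(x_0),\R^{n+1})$ with $\int_{\pa^*E}\la X,\nu_E\ra\,\de\Ha^n\neq 0$; the associated flow changes $|E|$ at nonzero rate while perturbing $P$ and $\int_E \bar d_F \dx$ at a bounded rate, the latter because $\bar d_F$ is Lipschitz. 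Given any competitor $E'$ with $|E'|\neq|F|$, applying the flow for a short time restores the volume at a total cost $\leq \tilde\Lambda \bigl||E'|-|F|\bigr|$ in $\F_h(\cdot,F)$, closing the absorption argument.

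\emph{Step 2 (regularity and Euler-Lagrange).} Since $\bar d_F$ is bounded and Lipschitz, the functional in \eqref{def:minprob-const} is a bounded perturbation of $P$, so $E$ is an $(\omega,r_0)$-minimizer of the perimeter in the sense of Tamanini. The De Giorgi--Federer--Tamanini regularity theorem then yields an open representative of $E$ whose reduced boundary $\pa^*E$ is a $C^{1,\alpha}$-hypersurface, with singular set $\pa E\setminus\pa^*E$ of Hausdorff dimension at most $n-7$. Computing the first variation of $\F_h(\cdot,F)$ along normal perturbations $\vphi\nu_E$ with $\vphi$ compactly supported in the regular part and $\int\vphi\de\Ha^n=0$ gives that $H_E + \bar d_F/h$ is a constant $\lambda$ on $\pa^*E$, which is \eqref{eq:euler}. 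Since $\bar d_F$ is Lipschitz, \eqref{eq:euler} forces $H_E\in L^\infty$, and Schauder estimates applied to the graph equation for $\pa^*E$ then bootstrap the regularity to $C^{2,\alpha}$.

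\emph{Step 3 (second variation).} For $\vphi\in H^1(\pa^*E)$ with $\int\vphi\de\Ha^n=0$, first approximate by smooth functions compactly supported in the regular part and construct a one-parameter family of diffeomorphisms $\Phi_t$ of $\R^{n+1}$ with $\Phi_0=\mathrm{id}$, initial normal velocity $\vphi$ on $\pa^*E$, and $|\Phi_t(E)|=|E|$ for all small $t$ (the latter arranged by the usual volume correction, supported away from the singular set, exploiting the mean-zero condition). Minimality gives $\F_h(\Phi_t(E),F)\geq \F_h(E,F)$, so the second derivative at $t=0$ is non-negative; a direct computation (see e.g.\ \cite{AFM}) shows that this second derivative equals the left-hand side of \eqref{eq:2ndVar}, the acceleration and tangential-projection terms cancelling by virtue of \eqref{eq:euler} and volume preservation. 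A density argument then extends the inequality to all $\vphi\in H^1(\pa^*E)$ with zero mean.

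The main obstacle is Step 1: the diffeomorphism trick must tolerate the additional dissipation term, but since the density estimate uniform in $h$ is treated separately in Proposition \ref{prop:density}, only a qualitative $\tilde\Lambda$ is needed here, and the Lipschitz bound on $\bar d_F$ suffices to control the variation of the dissipation under the volume-correcting flow.
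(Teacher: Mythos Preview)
Your proposal is correct and follows essentially the same route as the paper: reduce to the penalized problem \eqref{def:minprob-const} via the Gr\"uter/\cite[Lemma 17.21]{Ma} argument, invoke $\Lambda$-minimizer regularity for $C^{1,\alpha}$, bootstrap to $C^{2,\alpha}$ via \eqref{eq:euler} and Schauder, and obtain \eqref{eq:2ndVar} by constructing volume-preserving diffeomorphisms with prescribed normal velocity as in \cite{AFM}. The only point the paper makes explicit that you leave implicit is that the density argument extending \eqref{eq:2ndVar} from $C_0^1(\pa^*E)$ to $H^1(\pa^*E)$ relies on the singular set having zero capacity.
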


\begin{proof}
Since the argument is standard, I will only give the outline. As I already mentioned,  the minimizer $E$ is also a minimizer of the problem \eqref{def:minprob-const} for some large constant $\tilde \Lambda$, which depend on $h$ and on $E$ itself. This implies that the set $E$ is a $\Lambda$-minimizer of the perimeter and thus the reduced boundary $\pa^* E$ is relatively open,  $C^{1,\alpha}$-regular hypersurface and the singular set $\pa E \setminus \pa^* E$ has dimension  at most $n-7$ \cite{Ma}. The $C^{2,\alpha}$-regularity then follows from the Euler-Lagrange equation and from standard Schauder-estimates for elliptic PDEs. 

One may   obtain the second variation condition \eqref{eq:2ndVar}  by using the argument from \cite{AFM}. Indeed, given a function  $\vphi \in C_0^1(\pa^* E)$ with $\int_{\pa^* E} \vphi \de \Ha^n = 0$, we may construct a family of diffeomorphisms $\Phi_t$ such that $\Phi_0 = id$, $|\Phi_t(E)| = |E|$ and $\frac{\pa}{\pa t}\big|_{t=0}\Phi_t(x)\cdot \nu_E = \vphi $. Then the inequality follows from the minimality of $E$ as
\[
\frac{\pa^2}{\pa t^2} \big|_{t=0} \F_h(\Phi_t(E),F) \geq 0 
\] 
and following the standard calculation of the second variation (see e.g. \cite{AFM}). Finally one obtains \eqref{eq:2ndVar} for all  $\vphi \in H^1(\pa^* E)$ by approximation argument and by the fact that the singular set has zero capacity. 
\end{proof}

\section{Density estimates} \label{sec:density}

This section  is the theoretical core of the paper. The aim is to prove the following density estimate. 
\begin{proposition}
\label{prop:density}
Let $F \subset \R^{n+1}$ be an open and bounded set of finite perimeter, fix $h \in (0,1)$ and  let $E$  be  a minimizer of \eqref{def:minprob}. Then  there is a  constant $c>0$, which   depends on the dimension $n$, $|F|$ and on  $P(F)$ such that for all $r \leq  \sqrt{h}$ and all $x \in \pa E$ it holds 
\[
\min \big{\{} |E \cap B_{r}(x)| , |B_{r}(x) \setminus E| \big{\}} \geq c r^{n+1}
\]
and for all $r \leq C_0 \sqrt{h}$, where $C_0 \geq1$, it holds 
\[
c r^{n} \leq P(E, B_r(x)) \leq C_1 r^n,  
\]
where $C_1$ depends also  on $C_0$. Moreover,  the following holds 
\[
\|H_E\|_{L^{\infty}(\pa^* E)} \leq \frac{1}{c \sqrt{h}} \quad \text{and} \quad \|\bar d_F\|_{L^{\infty}(\pa E)} \leq  c^{-1} \sqrt{h}.
\]
\end{proposition}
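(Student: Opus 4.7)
The proof has three phases. First, comparing the minimizer $E$ with $F$ itself as an admissible competitor yields the baseline bounds
\[
P(E)\leq P(F)\quad\text{and}\quad \int_{E\Delta F}|\bar d_F|\dx\leq h\bigl(P(F)-P(E)\bigr)\leq hP(F),
\]
where the identity $\int_E\bar d_F\dx-\int_F\bar d_F\dx=\int_{E\Delta F}|\bar d_F|\dx$ follows from the sign of $\bar d_F$ on $E\setminus F$ and $F\setminus E$. Second, for any $x_0\in\pa E$ and $r>0$, I would establish a local $\Lambda$-minimality of the perimeter at scale $r$ in $B_r(x_0)$ with $\Lambda\leq (|\bar d_F(x_0)|+r)/h+C(|F|,P(F))$: given a competitor $\tilde E$ with $E\Delta\tilde E\subset B_r(x_0)$, restore the volume by adding or removing a small ball placed inside a ``safe region'' of controlled size located in $F$ or in the complement, whose existence is guaranteed by $|E|=|F|$, $P(E)\leq P(F)$, and the isoperimetric inequality, and depends only on $|F|$ and $P(F)$. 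Since $|\bar d_F|\leq|\bar d_F(x_0)|+r$ on $B_r(x_0)$, the estimate on $\Lambda$ follows.

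\textbf{Key step: the sharp $L^\infty$ bound on $\bar d_F$.} The crux is
\[
\|\bar d_F\|_{L^\infty(\pa E)}\leq c^{-1}\sqrt h.
\]
A naive comparison based on removing $E\cap B_r(x_0)$ together with a ball-type volume correction only gives $\|\bar d_F\|_{L^\infty(\pa E)}\leq Ch^{1/(n+2)}$, which is not sharp. I would instead test the second variation inequality \eqref{eq:2ndVar} with
\[
\vphi=\bar d_F-\fint_{\pa^* E}\bar d_F\,\de\Ha^n,
\]
which has zero mean on $\pa^* E$, satisfies $|\nabla_\tau\vphi|=|\nabla_\tau\bar d_F|\leq 1$, and by the Euler--Lagrange equation \eqref{eq:euler} coincides with $-h(H_E-\bar H_E)$, where $\bar H_E=\fint_{\pa^* E}H_E\,\de\Ha^n$. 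Substituting into \eqref{eq:2ndVar} produces a weighted Poincar\'e-type inequality bounding $h^2\int_{\pa^* E}|B_E|^2(H_E-\bar H_E)^2\,\de\Ha^n$ by $P(E)$ plus a correction of lower order controlled by $\frac{1}{h}\int_{\pa^* E}\langle\nabla\bar d_F,\nu_E\rangle\vphi^2\,\de\Ha^n$. This controls the oscillation of $\bar d_F$ on $\pa^* E$; combined with the $L^1$ dissipation bound $\int_{E\Delta F}|\bar d_F|\dx\leq hP(F)$ from the first phase and a dichotomy argument at a point $x_0$ realising the maximum of $|\bar d_F|$ on $\pa E$, this rules out $|\bar d_F(x_0)|\gg\sqrt h$. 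This is the main obstacle: it is exactly here that Proposition~\ref{prop:regularity} is needed to match the parabolic scaling $\sqrt h$ of the minimizing movement scheme, whereas comparison arguments alone lose powers of $h$.

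\textbf{Conclusion.} With $\|\bar d_F\|_{L^\infty(\pa E)}\leq c^{-1}\sqrt h$ in hand, integrating the Euler--Lagrange equation \eqref{eq:euler} over $\pa^* E$ yields $\lambda=\bar H_E+\frac{1}{h}\fint_{\pa^* E}\bar d_F\,\de\Ha^n$, and the weighted Poincar\'e inequality from the key step together with $P(E)\leq P(F)$ provides the bound $|\lambda|\leq c^{-1}/\sqrt h$. Hence $\|H_E\|_{L^\infty(\pa^* E)}\leq c^{-1}/\sqrt h$ is immediate from $H_E=\lambda-\bar d_F/h$. Feeding these $L^\infty$ bounds back into the local $\Lambda$-minimality of the first phase, I obtain $\Lambda\leq c^{-1}/\sqrt h$ on every ball $B_r(x_0)$ with $x_0\in\pa E$ and $r\leq C_0\sqrt h$. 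The density lower bounds $|E\cap B_r(x)|,|B_r(x)\setminus E|\geq cr^{n+1}$ and the upper and lower perimeter density bounds $cr^n\leq P(E,B_r(x))\leq C_1 r^n$ then follow from the classical density estimates for $\Lambda$-minimizers of the perimeter, as in \cite{Ma}.
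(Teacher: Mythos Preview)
Your overall architecture is reasonable and, in particular, you correctly identify that the second variation condition \eqref{eq:2ndVar} is the key ingredient for the sharp scaling $\sqrt{h}$. However, the ``Key step'' does not close as written, and this is a genuine gap rather than a matter of detail.

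Testing \eqref{eq:2ndVar} with $\vphi=\bar d_F-\fint_{\pa^* E}\bar d_F$ yields
\[
\int_{\pa^* E}|B_E|^2\vphi^2\,\de\Ha^n \;\leq\; \int_{\pa^* E}|\nabla_\tau\bar d_F|^2\,\de\Ha^n
\;+\;\frac{1}{h}\int_{\pa^* E}\la\nabla\bar d_F,\nu_E\ra\,\vphi^2\,\de\Ha^n,
\]
and since $|\nabla_\tau\bar d_F|\leq 1$ and $|\la\nabla\bar d_F,\nu_E\ra|\leq 1$ this gives only
\[
\int_{\pa^* E}|B_E|^2\vphi^2\,\de\Ha^n\leq P(E)+\frac{1}{h}\int_{\pa^* E}\vphi^2\,\de\Ha^n.
\]
The ``correction'' on the right is $\frac{1}{h}\int\vphi^2=h\int(H_E-\bar H_E)^2$, which is \emph{not} a priori of lower order: you have no bound on $\|H_E-\bar H_E\|_{L^2}$ yet, and you cannot absorb it into the left-hand side because $|B_E|^2\geq H_E^2/n$ does not imply $|B_E|^2\geq c/h$ on the support of $\vphi$ without already knowing something about $\lambda$ or $\bar H_E$. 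In short, this single global test function controls neither $\|H_E-\bar H_E\|_{L^2}$ nor $\|\bar d_F\|_{L^\infty(\pa E)}$, and the appeal to a ``dichotomy argument'' combined with the bulk dissipation $\int_{E\Delta F}|\bar d_F|\leq hP(F)$ does not bridge the gap: that integral lives on $E\Delta F$, not on $\pa E$, and converting it into pointwise information on $\pa E$ already requires the density estimates you are trying to prove.

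The paper proceeds quite differently, in three separate steps, each using \emph{localized} cut-off test functions in \eqref{eq:2ndVar}. First (Lemma~\ref{lem:curvaturebound1}), a cut-off $\zeta$ centred at a ball where $E$ has half-density, together with the observation that then $\bar\zeta=\fint\zeta$ is bounded away from $0$ and $1$, gives $\|H_E\|_{L^2(\pa^* E)}\leq C/\sqrt{h}$. Second (Lemma~\ref{lem:lagrangebound}), a Besicovitch covering of the set $\{|H_E|\leq\hat C/\sqrt{h}\}$ combined with the monotonicity-type lower bound of Lemma~\ref{lem:density1} and the upper bound of Lemma~\ref{lem:density2} yields $|\lambda|\leq C/\sqrt{h}$. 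Only then is the $L^\infty$ bound on $\bar d_F$ proved, by contradiction: assuming $|\bar d_F(x_0)|=\Lambda\sqrt{h}$ with $\Lambda$ large, one tests \eqref{eq:2ndVar} with cut-offs $\zeta_k$ on the concentric balls $B_{r_k}(x_0)$, $r_k=r_0+k\sqrt{h}$, $r_0=\sqrt{h}/(2\Lambda)$; since $|H_E|\geq c\Lambda/\sqrt{h}$ on each $B_{r_k}$ (this is where the bound on $\lambda$ is used) one obtains the geometric growth $P(E,B_{r_{k+1}})\geq c\Lambda\,P(E,B_{r_k})$. Iterating $n{+}1$ times from the lower bound $P(E,B_{r_0})\geq c r_0^n$ contradicts the upper bound $P(E,B_{r_{n+1}})\leq C h^{n/2}$ of Lemma~\ref{lem:density2}. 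This localization and iteration is precisely what your global choice of $\vphi$ cannot produce.
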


It is interesting that in \cite[Corollary 3.3]{MSS} the authors obtain similar result for their scheme for a constant which is independent of $P(F)$. 

I need several lemmas in order to prove  \cref{prop:density} and therefore I postpone its proof to the end of the section. Before proceeding to technical details, I state a useful consequence  of Proposition \ref{prop:density}. 
\begin{proposition}
\label{coro:minima}
Let $F, E \subset \R^{n+1}$ be as in Proposition \ref{prop:density}. Then there are constants $C\geq 1, c>0$ and $h_0 >0$, depending on the dimension, $|F|$ and $P(F)$ such that $E$ is $(\Lambda, r)$- minimizer of the perimeter for  $\Lambda = \frac{C}{\sqrt{h}}$,  $r = c \sqrt{h}$ and for $h < h_0$.  To be more precise, for   sets $G\subset \R^{n+1}$ with $E \Delta G \subset B_{c \sqrt{h}}(x_0)$  it holds 
\[
P(E) \leq P(G) +\frac{C}{\sqrt{h}} |E \Delta G|.
\] 
\end{proposition}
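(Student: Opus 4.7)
\emph{Proof plan.}
The plan is to modify a general competitor $G$ with $E \Delta G \subset B_{c\sqrt h}(x_0)$ into a volume-preserving competitor $\tilde G$ with $|\tilde G| = |F|$ via a small diffeomorphism supported in a ball disjoint from $B_{c\sqrt h}(x_0)$, and then apply the minimality of $E$ in \eqref{def:minprob}. Set $V := |G| - |F|$, so $|V| \leq |E \Delta G|$. If $\pa^* E \cap B_{c\sqrt h}(x_0) = \emptyset$ then $P(E, B_{c\sqrt h}(x_0)) = 0$ and the claim is trivial. Otherwise the bound $\|\bar d_F\|_{L^\infty(\pa E)} \leq c^{-1}\sqrt h$ from Proposition \ref{prop:density} together with the $1$-Lipschitz continuity of $\bar d_F$ yields $\|\bar d_F\|_{L^\infty(B_{c\sqrt h}(x_0))} \leq C\sqrt h$, which is the crucial bound controlling the dissipation term near $x_0$.

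Testing $F$ itself in \eqref{def:minprob} gives $P(E) \leq P(F)$, and combined with the isoperimetric inequality $P(E) \geq c_n|F|^{n/(n+1)}$ and the upper density $P(E, B_\rho(\cdot)) \leq C_1\rho^n$ from Proposition \ref{prop:density}, for $h < h_0(n, |F|, P(F))$ I locate a regular point $y \in \pa^* E$ with $|y - x_0| \geq 3c'\sqrt h$. Put $r' := c'\sqrt h$, so $B_{r'}(y) \cap B_{c\sqrt h}(x_0) = \emptyset$ and $c h^{n/2} \leq P(E, B_{r'}(y)) \leq C h^{n/2}$. Using the $C^{1,\alpha}$-regularity of $\pa^* E$ near $y$ at scale $\sqrt h$ (from the curvature bound $\|H_E\|_\infty \leq C/\sqrt h$), I construct a vector field $X \in C_c^1(B_{r'}(y); \R^{n+1})$ with
\[
\int_{\pa^* E}\la X, \nu_E\ra \de \Ha^n = 1, \quad \|X\|_\infty \leq C h^{-n/2}, \quad \|DX\|_\infty \leq C h^{-(n+1)/2}
\]
(for instance $X = \phi \,\nu_E(y)$ with a bump $\phi$ concentrated on $B_{r'/2}(y)$). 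Letting $\Phi_t$ be the flow of $X$ and noting that $G = E$ on $B_{r'}(y)$ and $\Phi_t = \mathrm{id}$ outside, one has $|\Phi_t(G)| - |G| = t + O(t^2\|DX\|_\infty)$; for $c$ small enough the equation $|\tilde G| := |\Phi_t(G)| = |F|$ is solvable with $|t| \leq 2|V|$, and the standard first-variation formulas then give
\[
|P(\tilde G) - P(G)| \leq C|t|\|DX\|_\infty \Ha^n(\pa^* E \cap B_{r'}(y)) \leq \frac{C|V|}{\sqrt h},
\]
\[
|\tilde G \Delta G| \leq C|t|\|X\|_\infty \Ha^n(\pa^* E \cap B_{r'}(y)) \leq C|V|.
\]

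Applying $\F_h(E, F) \leq \F_h(\tilde G, F)$, splitting $\chi_{\tilde G} - \chi_E = (\chi_{\tilde G} - \chi_G) + (\chi_G - \chi_E)$ in the resulting dissipation integral, and bounding each piece by $\|\bar d_F\|_\infty \leq C\sqrt h$ on $B_{r'}(y)$ and on $B_{c\sqrt h}(x_0)$ respectively yields $P(E) \leq P(G) + \frac{C}{\sqrt h}(|V| + |E \Delta G|) \leq P(G) + \frac{C}{\sqrt h}|E \Delta G|$, as desired. The hardest point is obtaining the sharp linear scaling $\Lambda \sim 1/\sqrt h$: a naive modification such as removing a small ball $B_\sigma(y) \cap E$ of volume $V$ produces a perimeter error of order $\sigma^n \sim |V|^{n/(n+1)}$, which is too crude when $|V| \ll h^{(n+1)/2}$. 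The diffeomorphism scaling above, dictated precisely by the two-sided density $P(E, B_{r'}(y)) \sim h^{n/2}$ of Proposition \ref{prop:density}, is what forces the three error contributions (perimeter, local dissipation, dissipation across $B_{c\sqrt h}(x_0)$) to balance at the common order $|V|/\sqrt h$. A secondary subtlety is ensuring that $h_0$ depends only on $|F|$ and $P(F)$, which is handled by the universal bound $P(E) \leq P(F)$ combined with the isoperimetric lower bound.
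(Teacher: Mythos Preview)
Your overall strategy---build a volume-preserving competitor $\tilde G$ via a local diffeomorphism supported away from $B_{c\sqrt h}(x_0)$, then invoke minimality together with the $L^\infty$ bound on $\bar d_F$ from Proposition~\ref{prop:density}---is sound, and your scaling discussion is correct and to the point. The gap is the sentence ``Using the $C^{1,\alpha}$-regularity of $\pa^*E$ near $y$ at scale $\sqrt h$ (from the curvature bound $\|H_E\|_\infty\le C/\sqrt h$)''. A mean-curvature bound alone does not give graphical regularity at the corresponding scale: Allard's theorem requires in addition that the density ratio $P(E,B_r(y))/(\omega_n r^n)$ be close to $1$, whereas Proposition~\ref{prop:density} only gives two-sided bounds with constants not close to $\omega_n$. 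Without small excess the integral $\int_{\pa^*E}\phi\,\la\nu_E(y),\nu_E\ra\,\de\Ha^n$ may be far smaller than $h^{n/2}$ (the normal can oscillate across $B_{r'}(y)$), and then the normalisation yielding $\|X\|_\infty\le Ch^{-n/2}$, $\|DX\|_\infty\le Ch^{-(n+1)/2}$ fails. The qualitative $\tilde\Lambda$-minimality from \eqref{def:minprob-const} does not help either, since $\tilde\Lambda$ is not controlled in terms of $h$; invoking quantitative regularity at scale $\sqrt h$ here is essentially circular, as $(\Lambda,r)$-minimality with $\Lambda\sim 1/\sqrt h$, $r\sim\sqrt h$ is exactly the statement being proved.

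The paper avoids this issue by a different, more elementary volume-fixing device. It first shows (via Besicovitch and the lower perimeter density from Proposition~\ref{prop:density}) that $E$ contains a full ball $B_\rho(x)$ with $\rho\sim\sqrt h$, and then adjusts the volume by the Gr\"uter sliding-ball construction: translate this fixed ball to $B_\rho(z)$ with $|G\cup B_\rho(z)|=|E|$ and use the elementary inequality $\Ha^n(\pa B_\rho(z)\setminus G)-\Ha^n(\pa G\cap B_\rho(z))\le C\rho^{-1}|B_\rho(z)\setminus G|$ to get the perimeter defect $\le C|V|/\sqrt h$ directly. This uses only the density estimates, no graphical structure of $\pa^*E$. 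Your diffeomorphism route can be salvaged by first locating a point of small excess at scale $c'\sqrt h$ (for instance by an averaging argument combined with almost-monotonicity), but that is genuine additional work, not a consequence of the curvature bound.
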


\begin{proof}
The argument is standard but I recall it for the reader's convenience. Let me first show that there is $x \in E$ and $\tilde c>0$ such that for $\rho = \tilde c \sqrt{h}$ it holds $B_\rho(x) \subset E $.  Fix $\rho$  and apply Besicovich covering theorem to find disjoint balls $\{B_\rho(x_i)\}_{i=1}^N$ such that $x_i \in E$ and 
\begin{equation} \label{prop:coro1}
\sum_{i = 1}^N|B_\rho(x_i)| = N |B_1| \rho^{n+1} \geq c |E|. 
\end{equation}
I claim that for some $i = 1,2, \dots, N$ it holds  $B_{\rho/2}(x_i)\subset E$. Indeed, if this is not the case then Proposition \ref{prop:density} implies
\[
P(E, B_\rho(x_i)) \geq c \rho^n 
\]
for all $i$.  Since the balls are disjoint, one has by the above and by \eqref{prop:coro1}
\[
P(E) \geq \sum_{i = 1}^NP(E, B_\rho(x_i)) \geq c N \, \rho^n \geq c \frac{|E|}{\rho} \geq \frac{c}{\sqrt{h}}.
\]
This is a contradiction when $h$ is small enough.

Fix $x_0 \in \pa E$ and $G$ as in the claim.  Note that in general the set $G$ does not have the same measure as $E$ and one needs to modify it to $\tilde G$ with $|\tilde G|= |E|$ e.g. by using the argument from \cite{Gru} as follows. Assume that $|G|< |E|$ (the case $|G|> |E|$ follows from similar argument). Since  $B_{\rho}(x) \subset E$, then by decreasing $\rho$ and $r$ if needed, it holds $B_{\rho}(x) \subset G$. By continuity there is $z \in \R^{n+1}$  such that $|z - x_0| \geq 2 \rho$  and $|G \cup B_{\rho}(z)| = |E|$. Define $\tilde G = G \cup B_{\rho}(z)$. Then by the minimality of $E$ and Proposition \ref{prop:density}  it holds
\[
P(E) \leq P(\tilde G) + \frac{C}{\sqrt{h}}|\tilde G \Delta E |. 
\]
Arguing as in  \cite{Gru}   one then  deduces
\[
\begin{split}
P(\tilde G)  - P(G) &\leq \Ha^n(\pa B_{\rho}(z) \setminus G) - \Ha^n(\pa G \cap B_{\rho}(z)) \\
&\leq  \frac{C}{\rho} |B_{\rho}(z) \setminus G | \leq \frac{C}{\sqrt{h}}|\tilde G \Delta E|
\end{split}
\]
and the claim follows as $|\tilde G \Delta E| \leq 2 |G \Delta E|$ . 
\end{proof}

The first technical result which I need  is the classical density estimate which can be found e.g. in \cite{Sim}. 
\begin{lemma}
\label{lem:density1}
Assume $E \subset \R^{n+1}$ is a set of finite perimeter with distributional mean curvature $H_E$ which satisfies $\| H_E\|_{L^\infty(B_{2R}(x_0))} \leq \Lambda$. Then for all $x \in B_R(x_0)$ and  $r \leq \min \{ R, \Lambda^{-1}\}$ it holds
\[
P(E, B_r(x)) \geq c_n r^n,
\]  
for a dimensional constant $c_n>0$. 
\end{lemma}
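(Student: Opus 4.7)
This is the classical Allard-type density lower bound for sets with bounded distributional mean curvature, and I would prove it by the standard monotonicity formula argument as presented, e.g., in \cite{Sim} or Chapter 17 of \cite{Ma}.

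The main technical step is to test the distributional definition of $H_E$ against the vector field $X(y) = y - x$ multiplied by a radial cutoff $\eta_\eps$ approximating $\chi_{B_r(x)}$ from below. Since $\Div_\tau X = (n+1) - \la \nu_E, \nu_E\ra = n$ on $\pa^* E$ and $|X| \leq r$ on $B_r(x)$, the hypothesis $\|H_E\|_{L^\infty(B_{2R}(x_0))} \leq \Lambda$ together with a passage to the limit $\eps \to 0^+$ at Lebesgue radii yields
\[
n\, u(r) \leq r\, u'(r) + \Lambda\, r\, u(r) \qquad \text{for a.e.\ } r \in (0, R),
\]
where I set $u(r) := P(E, B_r(x))$. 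Rearranging, $\bigl(\log(e^{\Lambda r} r^{-n} u(r))\bigr)' \geq 0$, so the function $r \mapsto e^{\Lambda r} u(r)/r^n$ is nondecreasing on $(0, R)$.

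Integrating from $\rho$ to $r \leq \min\{R, \Lambda^{-1}\}$ and using $\Lambda r \leq 1$ gives
\[
\frac{u(r)}{r^n} \geq e^{-\Lambda(r-\rho)} \frac{u(\rho)}{\rho^n} \geq \frac{1}{e}\, \frac{u(\rho)}{\rho^n}.
\]
To close the argument, I would let $\rho \to 0^+$ for a point $x \in \pa^* E$: De Giorgi's structure theorem yields $u(\rho)/\rho^n \to \omega_n$, and the estimate follows with $c_n = \omega_n/e$. For a general point of $\pa E$ (which is the only interesting case, since at interior or exterior points of $E$ the conclusion is vacuous), one approximates by $x_j \in \pa^* E \to x$ -- available since the singular set has Hausdorff dimension at most $n-7$ -- and uses the lower semicontinuity of $r \mapsto P(E, B_r(\cdot))$ to pass to the limit.

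The only delicate step is the derivation of the differential inequality: one must correctly handle the boundary flux through $\pa^* E \cap \pa B_r(x)$ arising when $\nabla \eta_\eps$ concentrates on $\pa B_r(x)$. This term, by a direct computation at regular radii, has the right sign and is exactly what produces the $r\, u'(r)$ contribution; the bulk term $\Div_\tau X = n$ gives the $n\, u(r)$ term on the left, while $|\la X, \nu_E\ra H_E| \leq \Lambda r$ yields the error $\Lambda r\, u(r)$ on the right. The remaining integration and invocation of De Giorgi's theorem are elementary.
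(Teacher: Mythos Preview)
The paper does not actually prove this lemma: it is stated as a classical result and the author simply refers to \cite{Sim}. Your sketch is precisely the standard monotonicity-formula argument one finds there, and it is correct in substance.

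One small remark: to pass from $x \in \pa^* E$ to a general $x \in \pa E$ you invoke the bound ``dimension at most $n-7$'' on the singular set. That estimate is a regularity property of (almost-)minimizers, not of arbitrary sets with bounded distributional mean curvature, and it is not what is needed here. What you actually use is only that $\pa^* E$ is dense in the support of the perimeter measure (the essential boundary), which holds for \emph{every} set of finite perimeter once one works with the standard representative. With that correction the approximation step goes through as you describe.
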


For a minimizer of  \eqref{def:minprob} it holds the inverse of the isoperimetric inequality. 
\begin{lemma}
\label{lem:density2}
Let $F$ and $ E$ be as in \cref{prop:density}. Then for all $x \in \pa E$ and $r \leq C_0\sqrt{h}$ it holds 
\[
P(E,B_r(x)) \leq \frac{C}{r} \min \big{\{} |E \cap B_{2r}(x)| , |B_{2r}(x) \setminus E| \big{\}},
\]
for a constant which depends on the dimension and on $C_0>0$. In, particular it holds  $P(E,B_r(x))  \leq C  r^n$.
\end{lemma}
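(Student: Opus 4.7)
The plan is to exploit the quasi-minimality from \cref{coro:minima}, which asserts that $E$ is a $(\Lambda, r_0)$-perimeter minimizer with $\Lambda = C/\sqrt{h}$ and range $r_0 = c\sqrt{h}$. The main obstacle is that this quasi-minimality only operates up to scale $c\sqrt{h}$, while the lemma requires the estimate in the larger range $r \leq C_0 \sqrt{h}$; this will force a Vitali-covering step whose constants depend on $C_0$. Otherwise the desired inequality is a standard consequence of quasi-minimality obtained by comparing $E$ against its truncation $E \setminus \bar B_r(x)$ for the bound in $|E \cap B_{2r}(x)|$ and against the enlargement $E \cup B_r(x)$ for the bound in $|B_{2r}(x) \setminus E|$, followed by an integration in the radius.

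For a.e.\ $r \in (0, c\sqrt{h}]$ and any $x \in \pa E$, plugging $G = E \setminus \bar B_r(x)$ into \cref{coro:minima} gives $P(E) \leq P(G) + \Lambda |E \cap B_r(x)|$. Combining the slicing identities $P(G) = P(E, \R^{n+1} \setminus \bar B_r(x)) + \Ha^n(\pa B_r(x) \cap E^{(1)})$ and $P(E) = P(E, B_r(x)) + P(E, \R^{n+1} \setminus \bar B_r(x))$, both valid for a.e.\ $r$, yields
\[
P(E, B_r(x)) \leq \Ha^n(\pa B_r(x) \cap E^{(1)}) + \Lambda \, |E \cap B_r(x)|.
\]
Setting $V(r) := |E \cap B_r(x)|$, the coarea formula identifies $V'(r) = \Ha^n(\pa B_r(x) \cap E^{(1)})$ for a.e.\ $r$, so $P(E, B_r(x)) \leq V'(r) + (C/\sqrt{h}) V(r)$. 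Integrating from $r$ to $2r$ (with $2r \leq c\sqrt{h}$) and using the monotonicity of $s \mapsto P(E, B_s(x))$ gives
\[
r \, P(E, B_r(x)) \leq \int_r^{2r} P(E, B_s(x)) \, ds \leq V(2r) + \frac{C r}{\sqrt{h}} \, V(2r) \leq C' \, V(2r),
\]
where the last step uses $r/\sqrt{h} \leq c/2$. Running the symmetric argument with $G = E \cup B_r(x)$ produces the $|B_{2r}(x) \setminus E|$ bound and completes the proof for $r \leq c\sqrt{h}/2$.

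To extend to $c\sqrt{h}/2 < r \leq C_0 \sqrt{h}$, the plan is to cover $\pa^* E \cap B_r(x)$ by balls $\{B_{r'}(y_i)\}$ with $y_i \in \pa^* E \cap B_r(x)$ and $r' = c\sqrt{h}/20$, chosen so that the previous step applies at radius $5r' \leq c\sqrt{h}/2$ around each $y_i$. The Vitali covering lemma supplies a disjoint subfamily whose fivefold enlargements still cover; summing the bounds from the previous paragraph and using the bounded overlap of the family $\{B_{10 r'}(y_i)\}$ together with the inclusion $B_{10 r'}(y_i) \subset B_{2r}(x)$ (which holds since $10 r' < r$ in this range) gives
\[
P(E, B_r(x)) \leq \frac{C''}{r'} \, |E \cap B_{2r}(x)|.
\]
Since $r'$ is a fixed multiple of $\sqrt{h}$ and $r \leq C_0 \sqrt{h}$, the factor $1/r'$ is absorbed into $C/r$ with the final constant $C$ depending on $C_0$, which delivers the stated conclusion.
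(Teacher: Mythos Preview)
Your argument is circular. You invoke \cref{coro:minima} as the source of the $(\Lambda,r_0)$-quasi-minimality with $\Lambda = C/\sqrt{h}$ and $r_0 = c\sqrt{h}$, but in the paper \cref{coro:minima} is derived \emph{from} \cref{prop:density}, and the proof of \cref{prop:density} (as well as that of \cref{lem:lagrangebound}, on which it also relies) explicitly uses \cref{lem:density2}. So the logical chain is
\[
\text{\cref{lem:density2}} \ \Rightarrow\ \text{\cref{lem:lagrangebound}} \ \Rightarrow\ \text{\cref{prop:density}} \ \Rightarrow\ \text{\cref{coro:minima}},
\]
and you are trying to close it back on itself. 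The whole point of \cref{lem:density2} in the paper is that it is one of the ingredients needed to \emph{obtain} the sharp $\Lambda \sim h^{-1/2}$ quasi-minimality; you cannot assume that quasi-minimality at the outset. Note also two symptoms of this: \cref{coro:minima} carries the restriction $h<h_0$ and constants depending on $|F|$ and $P(F)$, whereas \cref{lem:density2} is stated for all $h\in(0,1)$ with constants depending only on the dimension and $C_0$.

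The paper's proof avoids this by working directly with the \emph{constrained} minimality of $E$ for $\F_h(\cdot,F)$. One picks a good slicing radius $\rho\in(r,2r)$ with $\Ha^n(\pa B_\rho\cap E)\leq C_n|E\cap B_{2r}|/r$, removes $E\cap B_\rho$, and then \emph{restores the volume} by adding back a small ball $B_{\tilde\rho}\subset B_\rho$ with $|B_{\tilde\rho}|=|E\cap B_\rho|$, obtaining a competitor $E_2$ with $|E_2|=|E|$. The dissipation term is controlled using only the $1$-Lipschitz property of $\bar d_F$ together with $|E|=|E_2|$ and $E\Delta E_2\subset B_{2r}$, which gives the factor $r/h\leq C_0/\sqrt{h}$ and hence the $C/r$ in the final estimate. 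No a priori curvature or Lagrange-multiplier bound is needed, which is precisely why this lemma can sit at the base of the chain above.
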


\begin{proof}
Fix $h \in (0,1)$, $x$ and $r>0$ as in the claim and without loss of generality assume that $x = 0$. One may also consider only the case $ |E \cap B_{2r}| \leq |B_{2r}\setminus E|$ as the other case is similar. In particular, then it holds  $|E \cap B_{2r}|  \leq \frac12 |B_{2r}|$. 

Since
\[
\int_0^{2r} \Ha^n(\pa B_\rho \cap E) = |E \cap B_{2r}| ,
\]
there is  $ \rho \in (r, 2r)$ such that 
\beq \label{eq:density2-1}
\Ha^n(\pa B_\rho \cap E) \leq  C_n \frac{|E \cap B_{2r}| }{r} \quad \text{and} \quad |B_{\rho}| \geq \frac23 |B_{2r}|. 
\eeq
Consider first the set $E_1 = E \setminus \bar B_\rho $. In order to have a competing set with the volume of $E$, define $\tilde \rho < \rho$ to be a radius such that $|B_{\tilde \rho}| = |E \cap B_\rho|$ and define $E_2 = E_1 \cup B_{\tilde \rho}$. Then it holds by construction that $|E_2| = |E|$.   By the minimality of $E$ we have 
\[
P(E) + \frac{1}{h}\int_{E} \bar d_F \dx \leq P(E_2) + \frac{1}{h}\int_{E_2} \bar d_F \dx.
\]
Estimate the perimeter of $E_2$ using \eqref{eq:density2-1} as
\[
\begin{split}
P(E_2) &\leq  P(E, \R^{n+1}\setminus  B_\rho) + \Ha^n(\pa B_\rho \cap E) + \Ha^n(\pa B_{\tilde \rho}) \\
&\leq  P(E, \R^{n+1}\setminus  B_\rho) + C_n \frac{|E \cap B_{2r}| }{r}  . 
\end{split}
\]
Use then   $E \Delta E_2 \subset B_{2r}$, $|E| = |E_2|$  and the fact that the signed distance function is $1$-Lipschitz to estimate 
\[
\big|\int_{E } \bar d_F \dx - \int_{E_2} \bar d_F\big| \leq  4r |E \cap B_\rho| \leq  4r |E \cap B_{2r}|.  
\]
Therefore one obtains by combining the three above inequalities and $r \leq  C_0 \sqrt{h}$
\[
P(E, B_r) \leq P(E, B_\rho) \leq C_n \frac{|E \cap B_{2r}| }{r}  + \frac{4r}{h} |E \cap B_{2r}| \leq C  \frac{|E \cap B_{2r}| }{r}  . 
\]
\end{proof}

 By Lemma \ref{lem:density1} and Lemma \ref{lem:density2} it is clear  that for \cref{prop:density} it is crucial  to prove the curvature estimate $\|H_E\|_{L^{\infty}} \leq \frac{C}{\sqrt{h}}$. The next lemma is a step towards this.

\begin{lemma}
\label{lem:curvaturebound1}
Let $F,  E$ and $h$ be as in \cref{prop:density}. Then it holds 
\[
\|H_E\|_{L^{2}(\pa^* E)} \leq \frac{C_1}{\sqrt{h}},
\]
where the constant $C_1$ depends on the dimension and on $|F|$ and $P(F)$.  
\end{lemma}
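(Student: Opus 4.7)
The plan is to use the Euler--Lagrange equation from \cref{prop:regularity}, which gives $H_E = \lambda - \bar d_F/h$ pointwise on $\partial^* E$, and to reduce the $L^2$ bound on $H_E$ to bounds on $\lambda$ and on $\bar d_F$ restricted to $\partial^* E$. Squaring and integrating yields
\[
\int_{\partial^* E} H_E^2\, \de \Ha^n \leq 2 \lambda^2 P(E) + \frac{2}{h^2} \int_{\partial^* E}\bar d_F^2\, \de \Ha^n,
\]
so it suffices to establish, with constants depending only on $n$, $|F|$ and $P(F)$: (a) $P(E) \leq P(F)$; (b) $\int_{\partial^* E}\bar d_F^2\, \de \Ha^n \leq C h$; and (c) $|\lambda| \leq C h^{-1/2}$.

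Step (a) is the standard dissipation inequality. Since $|E| = |F|$, the set $F$ is an admissible competitor in \eqref{def:minprob}, and rewriting $\F_h(E, F) \leq \F_h(F, F)$ gives
\[
P(E) + \frac{1}{h}\int_{E \Delta F}\dist(\cdot, \pa F)\, \dx \leq P(F),
\]
which yields both $P(E) \leq P(F)$ and the crucial bulk bound $\int_{E \Delta F}|\bar d_F|\,\dx \leq hP(F)$. Step (c) is obtained by testing the Euler--Lagrange equation against the normal component of the position field $X(x) = x$: the tangential divergence identity $\int_{\partial^* E} H_E\la x, \nu_E\ra\, \de \Ha^n = nP(E)$ (since $\Div_\tau X = n$), combined with the divergence theorem $\int_{\partial^* E}\la x, \nu_E\ra \, \de\Ha^n = (n+1)|E|$, leads to
\[
(n+1)|E|\,\lambda = nP(E) + \frac{1}{h}\int_{\partial^* E}\bar d_F\,\la x, \nu_E\ra\, \de \Ha^n.
\]
The first term is bounded by step (a), while Cauchy--Schwarz together with step (b) and a uniform bound on the diameter of $E$ (which follows from the minimality and the dissipation inequality by a concentration argument) yields $|\lambda| \leq Ch^{-1/2}$.

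The main obstacle is step (b), converting the bulk $L^1$ bound from (a) into a boundary $L^2$ bound with the correct scaling. The strategy is to combine the 1-Lipschitz character of $\bar d_F$ with the reverse isoperimetric inequality of \cref{lem:density2}. At a point $x \in \partial^* E$ with $|\bar d_F(x)| \geq t$, the 1-Lipschitz property forces $|\bar d_F| \geq t/2$ throughout $B_{t/2}(x)$, so the ball lies entirely on one side of $\pa F$ and its intersection with $E \Delta F$ contributes to the bulk bound. For $t \leq C_0\sqrt h$, \cref{lem:density2} provides $P(E, B_{t/2}(x)) \leq Ct^n$, and a Besicovitch covering converts this pointwise information into an $\Ha^n$-bound on the super-level set $\{x \in \partial^* E : |\bar d_F(x)| \geq t\}$; a layer-cake integration in $t$ then produces the $L^2$ bound. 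The subtle case of points at which $|\bar d_F(x)|$ exceeds the threshold $C_0\sqrt h$ has to be handled separately, by arguing that they form a set of negligible $\Ha^n$-measure directly from the bulk bound and a comparison with a ball-shaped competitor that corrects the volume.
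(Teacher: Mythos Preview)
Your approach is fundamentally different from the paper's and, as outlined, runs into a circularity that the paper's argument is specifically designed to avoid.

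In the paper's logical structure, \cref{lem:curvaturebound1} comes \emph{first}: the $L^2$ curvature bound is obtained directly from the second variation condition \eqref{eq:2ndVar}, by testing with $\vphi=\zeta-\bar\zeta$ for a cut-off $\zeta$ localized near a ball where $E$ has half-density (such a ball exists at a scale $r\in(c,1)$ depending only on $|F|$ and $P(F)$). The zero-mean constraint forces this choice; the point is that $\bar\zeta$ is bounded away from $0$ and $1$, so $(\zeta-\bar\zeta)^2$ is bounded below on the complementary region, and \eqref{eq:2ndVar} together with $|B_E|^2\ge H_E^2/n$ yields the bound. Only \emph{afterwards} does the paper bound $\lambda$ (\cref{lem:lagrangebound}, which uses \cref{lem:curvaturebound1}) and then $\|\bar d_F\|_{L^\infty(\pa^*E)}$ (\cref{prop:density}).

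Your route goes through the Euler--Lagrange equation and therefore needs (b) $\int_{\pa^*E}\bar d_F^2\le Ch$ and (c) $|\lambda|\le C/\sqrt h$ \emph{before} any curvature control is available. Both steps have gaps. In (b), the layer-cake argument via \cref{lem:density2} and the bulk bound works for $t\lesssim\sqrt h$, but the ``subtle case'' $|\bar d_F|\gg\sqrt h$ is not handled: the bulk bound only gives that $E\Delta F$ has small \emph{volume} far from $\pa F$, and without a lower density estimate (which is \cref{lem:density1} and requires the very $L^\infty$ curvature bound you are heading towards) you cannot rule out thin pieces of $\pa^*E$ carrying nontrivial $\Ha^n$-measure at large distance. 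The proposed ``ball-shaped competitor'' heuristic does not obviously close this. In (c), you invoke a uniform diameter bound on $E$; but in this paper the diameter control (\cref{lem:bounded}) is proved using the bound on $\lambda$, which in turn uses \cref{lem:curvaturebound1}. No diameter bound depending only on $n,|F|,P(F)$ is available at this stage, so the Cauchy--Schwarz estimate for $\frac{1}{h}\int_{\pa^*E}\bar d_F\langle x,\nu_E\rangle$ is not justified.

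In short, the second variation inequality is not an incidental tool here but the mechanism that breaks the circular dependence between the curvature bound, the Lagrange-multiplier bound, and the distance bound.
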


\begin{proof}
The proof relies on the second variation inequality in Proposition \ref{prop:regularity}. I would like to point out that in the case of the mean curvature flow, when there is no volume constraint, the proof is considerable easier as one could choose constant function in   \eqref{eq:2ndVar}. In the volume preserving case I will choose a cut-off function for a test function.   

To this aim  use  first \cite[Proposition 2.3]{JN} (see also \cite[Lemma 2.1]{MoPoSpa}) to find a point $x_0 \in \R^{n+1}$ and a radius $ r \in (c,1)$, where $c =  c(n,|F|, P(F))$, such that 
\[
|E \cap B_r(x_0)| = \frac12 |B_r|.
\]
 Note that the minimality of $E$ yields $P(E) \leq P(F)\leq C$. Moreover, by  the isoperimetric inequality it holds  $P(E) \geq c_n|E|^{\frac{n}{n+1}} = c_n|F|^{\frac{n}{n+1}} \geq c$. These  estimates are used repeatedly from now on without mentioning. Without loss of generality assume that $x_0 = 0$.  Choose $\rho  < r $ such that $|B_r \setminus B_\rho| = \frac14|B_r|$. Note that then $r - \rho \geq c_n >0$ and 
\beq \label{eq:curvaturebound1-0}
\frac34 |B_\rho| \geq |E \cap B_\rho| \geq \frac14 |B_\rho|.
\eeq
Define  first a cut-off function $\zeta \in C_0^1(\R^{n+1})$ such that $0 \leq \zeta \leq 1$, $\zeta = 1$ in $B_\rho$, $\zeta = 0$ outside $B_r$ and $|\nabla \zeta| \leq C_n$. Choose then $\vphi = \zeta - \bar \zeta$, where $\bar \zeta = \fint_{\pa^* E} \zeta \de \Ha^n$, as a test function in \eqref{eq:2ndVar}, use $|\la\nabla \bar d_F, \nu_E \ra| \leq 1$  and $|\vphi|\leq 1$, and obtain
\beq \label{eq:curvaturebound1-1}
\int_{\pa^* E}|B_E|^2 (\zeta - \bar \zeta)^2 \, \de \Ha^n \leq  \int_{\pa^* E} |\nabla_\tau \zeta |^2 \, \de \Ha^n + \frac{P(E)}{h}  .
\eeq
 Since $H_E = \text{Trace}(B_E)$,  it holds point wise on $\pa^* E$
\beq \label{eq:B-H}
|B_E|^2 \geq \frac{H_E^2}{n} . 
\eeq
Recall that  $0 \leq \zeta \leq 1$. Moreover, by the isoperimetric inequality and by \eqref{eq:curvaturebound1-0} it holds $P(E, B_{\rho}) \geq  c_n |E \cap B_\rho|^{\frac{n}{n+1}} \geq c $. Therefore $ \bar \zeta \geq c$ for $c = c(n,|F|, P(F))$. In particular, it holds $|\zeta(x) - \bar \zeta|  \geq c$ for $x \in \pa^* E \setminus B_r$. Hence, we have  by \eqref{eq:curvaturebound1-1}
\[
\int_{\pa^* E \setminus B_r}|H_E|^2 \, \de \Ha^n  \leq  \frac{C}{h}P(F) .
\]

We repeat the same argument by defining a cut-off function $\zeta \in C^1(\R^{n+1})$ as $\zeta = 0$ in $B_r$, $\zeta = 1$ outside $B_R$ and $|\nabla \zeta| \leq C_n$, where $R>r$ is such that $|B_R \setminus B_r| = \frac14|B_r|$. Using $\vphi = \zeta - \bar \zeta$  in  \eqref{eq:2ndVar} and arguing as above yields
\[
\int_{\pa^* E \cap  B_r}|H_E|^2  \, \de \Ha^n   \leq \frac{C}{h}P(F) 
\]
and the claim follows. 

\end{proof}

The last lemma I need is a bound on the Lagrange multiplier in the Euler-Lagrange equation \eqref{eq:euler}. 
\begin{lemma}
\label{lem:lagrangebound}
Let $F,  E$ and $h$ be as in \cref{prop:density}.  Then for the Lagrange multiplier in  \eqref{eq:euler}, i.e., 
\[
\frac{\bar d_{F}}{h} = - H_E + \lambda \quad \text{on } \, \pa^* E
\]
it holds 
\[
| \lambda | \leq  \frac{C_2}{\sqrt{h}},
\]
where the constant $C_2$ depends on the dimension, on $|F|$ and  on $P(F)$.  
\end{lemma}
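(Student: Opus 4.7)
The plan is to combine the Euler--Lagrange identity \eqref{eq:euler}, the $L^2$-curvature bound of Lemma~\ref{lem:curvaturebound1}, and the energy dissipation from the minimality of $E$. Integrating \eqref{eq:euler} over the reduced boundary gives
\[
\lambda\, P(E) \;=\; \int_{\partial^* E} H_E\, \de\Ha^n \;+\; \frac{1}{h}\int_{\partial^* E} \bar d_F\, \de\Ha^n.
\]
The isoperimetric inequality together with $P(E)\leq P(F)$ yields $c\leq P(E)\leq C$, while Cauchy--Schwarz and Lemma~\ref{lem:curvaturebound1} bound the curvature integral by $C/\sqrt h$. Thus the problem reduces to showing $\bigl|\int_{\partial^* E}\bar d_F\, \de\Ha^n\bigr|\leq C\sqrt h$.

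Testing the minimality of $E$ against the competitor $F$ itself, and using $\bar d_F\leq 0$ on $F$ and $\bar d_F\geq 0$ on $F^c$, the inequality $\F_h(E,F)\leq \F_h(F,F)$ rearranges to
\[
P(E) + \frac{1}{h}\int_{E\Delta F}|\bar d_F|\, \dx \;\leq\; P(F),
\]
so in particular $\int_{E\Delta F}|\bar d_F|\, \dx\leq Ch$. Since \eqref{eq:euler} forces $\lambda$ to be a single constant along $\partial^* E$, bounding it amounts to exhibiting one point $y_0\in\partial^* E$ with both $|H_E(y_0)|\leq C/\sqrt h$ and $|\bar d_F(y_0)|\leq C\sqrt h$; then $|\lambda|\leq 2C/\sqrt h$. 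Chebyshev applied to Lemma~\ref{lem:curvaturebound1} shows the curvature-good set $\{|H_E|\leq M/\sqrt h\}\cap\partial^* E$ has $\Ha^n$-measure at least $\tfrac{3}{4}P(E)$ once $M$ is chosen depending on $C_1$ and the lower bound on $P(E)$. It therefore suffices to prove that the distance-good set $\{y\in\partial^* E : |\bar d_F(y)|\leq M'\sqrt h\}$ has $\Ha^n$-measure exceeding $\tfrac{1}{4}P(E)$ for some $M'$ depending on $n,|F|,P(F)$.

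To establish this last bound, observe that for $y\in\partial^* E$ with $\bar d_F(y)>M'\sqrt h$ (the opposite sign being symmetric), $B_{M'\sqrt h/2}(y)\subset F^c$, so Markov's inequality together with the energy estimate yields
\[
|E\cap B_{M'\sqrt h/2}(y)|\;\leq\;\bigl|(E\setminus F)\cap\{|\bar d_F|>M'\sqrt h/2\}\bigr|\;\leq\;\frac{2C\sqrt h}{M'}.
\]
Feeding this into Lemma~\ref{lem:density2} controls $P(E,B_{M'\sqrt h/4}(y))$, and a Besicovich-type covering of the bad set then provides the required measure bound. The main obstacle is the technical execution of the covering: the constant in Lemma~\ref{lem:density2} grows quadratically in the scale parameter, and one must choose the covering radius (and possibly iterate/bootstrap) carefully so that the resulting bound on the boundary measure of the bad set is sharp enough to fall below $\tfrac14 P(E)$ for a finite $M'=M'(n,|F|,P(F))$. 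Once this is in place, evaluating \eqref{eq:euler} at a point in the (now nonempty) intersection of the two good sets yields $|\lambda|\leq C/\sqrt h$ with $C=C(n,|F|,P(F))$.
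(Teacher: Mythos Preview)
Your strategy---exhibit a single point of $\pa^*E$ where both $|H_E|\le C/\sqrt h$ and $|\bar d_F|\le C\sqrt h$, then read off $|\lambda|$ from \eqref{eq:euler}---is sound, and the curvature-good set obtained via Chebyshev and Lemma~\ref{lem:curvaturebound1} is exactly how the paper begins. The gap you flag is real, however, and it comes from the choice of covering scale. Applying Markov on each ball $B_{M'\sqrt h/2}(y)$ separately gives the \emph{same} volume bound $C\sqrt h/M'$ for every ball, while the constant in Lemma~\ref{lem:density2} at radius $r\sim M'\sqrt h$ is of order $(M')^2$; the two effects cancel and the perimeter-per-ball estimate does not improve as $M'\to\infty$, so no covering at that scale can close. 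The fix is to cover the distance-bad set at the \emph{fixed} scale $r=\sqrt h$ and use Markov globally: for a disjoint Besicovich subfamily $\{B_{\sqrt h}(y_j)\}$ with centres in $\{\bar d_F>M'\sqrt h\}\cap\pa^*E$, the doubled balls have bounded overlap and lie in $(E\setminus F)\cap\{\bar d_F>(M'-2)\sqrt h\}$, so $\sum_j|E\cap B_{2\sqrt h}(y_j)|\le C\sqrt h/M'$; then Lemma~\ref{lem:density2} with $r=\sqrt h$ (dimensional constant) gives $\sum_j P(E,B_{\sqrt h}(y_j))\le C/M'$, whence $\Ha^n(\{|\bar d_F|>M'\sqrt h\}\cap\pa^*E)\le C/M'<\tfrac14 P(E)$ for $M'$ large. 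With this correction your argument goes through.

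The paper organises things differently: it covers the curvature-\emph{good} set $\Sigma=\{|H_E|<\hat C/\sqrt h\}$ (not the distance-bad set) by disjoint balls $B_{\sqrt h}(x_i)$, uses \eqref{eq:euler} and the $1$-Lipschitz property of $\bar d_F$ to propagate the curvature bound from $x_i$ to all of $\pa^*E\cap B_{\sqrt h}(x_i)$, and then invokes the monotonicity-type Lemma~\ref{lem:density1} (which requires the pointwise curvature bound) together with Lemma~\ref{lem:density2} to obtain $|E\cap B_{\sqrt h}(x_i)|\ge c\sqrt h\,P(E,B_{\sqrt h}(x_i))$. Writing $|\lambda|=\Lambda/\sqrt h$, if $\Lambda$ is large then \eqref{eq:euler} forces $\bar d_F\ge\tfrac{\Lambda}{2}\sqrt h$ on every such ball, so they lie in $E\setminus F$ and contribute at least $c\Lambda\sum_iP(E,B_{\sqrt h}(x_i))\ge c\Lambda P(E)$ to $\tfrac1h\int_{E\setminus F}\bar d_F$, contradicting the energy bound $\le P(F)$. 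In short, the paper bounds the dissipation from below on the good set while your route bounds perimeter from above on the bad set; both rest on the same ingredients, but the paper's arrangement sidesteps the scale pitfall you encountered at the cost of also needing Lemma~\ref{lem:density1}.
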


\begin{proof}
Let $\Lambda \geq 0$ be such that $|\lambda | =   \frac{\Lambda}{\sqrt{h}}$.  Below all the constant depend on $n, |F|$ and $P(F)$. I only treat the case when $\lambda$ is positive as in the negative case the proof is the similar. Define the set 
\[
\Sigma = \{ x \in \pa^* E : |H_E(x)| < \frac{\hat C}{\sqrt{h}} \}.
\]  
 I claim that we may choose  $\hat C>2$ such that it  depends on $n, |F|, P(F)$ and on $C_1$ from \cref{lem:curvaturebound1} and it holds
\beq \label{eq:lagrangebound-1}
\Ha^n(\Sigma) \geq \frac{P(E)}{2}. 
\eeq
Indeed, by  the Euler-Lagrange equation \eqref{eq:euler} , by Lemma \ref{lem:curvaturebound1}  and by $\hat C>2$ it holds
\[
\frac{\hat C^2}{h}   \, \Ha^n(\pa^* E \setminus \Sigma) \leq \int_{\pa^* E \setminus \Sigma} H_E^2\, \de \Ha^n \leq    \int_{\pa^* E} H_E^2\, \de \Ha^n \leq \frac{C_1}{h}.
\]
By choosing $\hat C$ large enough one then obtains $\Ha^n(\pa^* E \setminus \Sigma) < P(E)/2$ and  \eqref{eq:lagrangebound-1} follows. 

By Besikovich covering theorem one finds disjoint balls of radius $\sqrt{h}$,  denote them $\{B_{\sqrt{h}}(x_i)\}_{i=1}^N$, with $x_i \in \Sigma$ such that 
\beq \label{eq:lagrangebound-2}
\sum_{i=1}^N P(E,  B_{\sqrt{h}}(x_i)) \geq c_n \Ha^n(\Sigma) \geq c P(E). 
\eeq
By the Euler-Lagrange equation \eqref{eq:euler} and by the definition of the set $\Sigma$  it holds for all $x \in \pa^* E \cap  B_{\sqrt{h}}(x_i)$ with $x_i \in \Sigma$ that 
\beq \label{eq:lagrangebound-3}
\begin{split}
|H_E(x)| &\leq \big|\frac{\bar d_F(x)}{h}  \lambda  \big| \leq \frac{|\bar d_F(x)- \bar d_F(x_i)|}{h} +  \big| \frac{\bar d_F(x_i)}{h} - \lambda  \big|\\
&\leq  \frac{1}{\sqrt{h}} +  |H_E(x_i)| \leq  \frac{2\hat C}{\sqrt{h}} .
\end{split}
\eeq
Therefore by Lemma \ref{lem:density1} it holds 
\[
P(E, B_{\sqrt{h}/2}(x_i)) \geq  P(E, B_{\sqrt{h}/2 \hat C}(x_i)) \geq c h^{n/2} .
\]
On the other hand applying Lemma \ref{lem:density2} first  with  $r =\sqrt{h}/2 $ yields
\[
|E \cup B_{\sqrt{h}}(x_i)| \geq c \sqrt{h}   P(E,  B_{\sqrt{h}/2}(x_i))   
\]
and then with $r =\sqrt{h}$ yields $h^{\frac{n+1}{2}} \geq c P(E,  B_{\sqrt{h}}(x_i))$. In conclusion it holds 
\beq \label{eq:lagrangebound-4}
  |E \cup B_{\sqrt{h}}(x_i)| \geq  c \sqrt{h}   P(E,  B_{\sqrt{h}}(x_i))  
\eeq
 for all balls in the cover. 

The minimality of $E$ implies 
\beq \label{eq:lagrangebound-5}
 \frac{1}{h} \int_{E \setminus F} \bar d_F\, \dx \leq P(E) + \frac{1}{h} \int_{E \Delta F} |\bar d_F|\, \dx \leq P(F) .
\eeq
Note that by \eqref{eq:lagrangebound-3}, by $\lambda =   \frac{\Lambda}{\sqrt{h}}$ and by  the Euler-Lagrange equation \eqref{eq:euler} we have for all $x \in B_{\sqrt{h}}(x_i)$ that 
\[
\bar d_F(x) \geq \lambda h  -|H(x)| h \geq (\Lambda- 2\hat C) \sqrt{h}.
\]
Therefore either $\Lambda \leq 4\hat C$, in which case the claim follows trivially, or 
\[
\bar d_F(x) \geq \frac{\Lambda}{2}\sqrt{h}.
\]
I assume the latter and show that even in this case $\Lambda$ is bounded. Indeed, by the above discussion  the balls $B_{\sqrt{h}}(x_i)$  are in the exterior of $F$. Therefore we estimate by \eqref{eq:lagrangebound-2} and \eqref{eq:lagrangebound-4} that 
\[
\begin{split}
\frac{1}{h} \int_{E \setminus F} \bar d_F\, \dx &\geq \frac{1}{h} \sum_{i=1}^N \int_{E \cap B_{\sqrt{h}}(x_i)} \bar d_F\, \dx\\
&\geq  \frac{1}{h} \sum_{i=1}^N    \left(  \frac{\Lambda}{2}\sqrt{h}   |E \cap B_{\sqrt{h}}(x_i)| \right) \\
&\geq  c \Lambda    \sum_{i=1}^N   P(E,  B_{\sqrt{h}}(x_i)) \\
&\geq c \Lambda    \Ha^n(\Sigma) \geq c \Lambda P(E).
\end{split}
\]
Since $ P(E) \geq c_n |E|^{\frac{n}{n+1}} = c_n |F|^{\frac{n}{n+1}}$, the above and \eqref{eq:lagrangebound-5} gives  a bound for   $\Lambda$  and the claim follows.  
\end{proof}

Here is the proof of the density estimate. 

\begin{proof}[\textbf{Proof of \cref{prop:density}}]
By Lemma \ref{lem:density1}, Lemma \ref{lem:density2},  Lemma \ref{lem:lagrangebound} and by the Euler-Lagrange equation \eqref{eq:euler} it is enough to prove 
\beq \label{eq:curvbound-1}
\|\bar d_F\|_{L^{\infty}(\pa^* E)} \leq  C \sqrt{h}.
\eeq
Argue by contradiction and assume that  there is $x_0 \in \pa^* E$ such that 
\[
|\bar d_F(x_0)| =  \Lambda  \sqrt{h} 
\] 
for large $\Lambda>>1$.   Without loss of generality  assume that $x_0 = 0$ and  consider only the case $\bar d_F(x_0) >0$ as  the other case  is similar. 

By  the Euler-Lagrange equation \eqref{eq:euler}, by Lemma \ref{lem:density1} and Lemma \ref{lem:lagrangebound} it holds for $r_0 = \frac{ \sqrt{h}}{2\Lambda}  $ that 
\beq \label{eq:curvbound-10}
P(E, B_{r_0}) \geq c r_0^n. 
\eeq
Define  radii $r_k = k\sqrt{h} +r_0$ for $k =0,1,\dots, n+1$. For every  $k = 0,1,\dots, n+1 $ choose a cut-off function $\zeta_k \in C_0^1(\R^{n+1})$ such that $0 \leq \zeta \leq 1$, $\zeta_k = 1$ in $B_{r_k}$, $\zeta_{k+1} = 0$ in   $\R^{n+1} \setminus B_{r_{k+1}}$ and $|\nabla \zeta_k| \leq \frac{2}{\sqrt{h}}$. Choose $\vphi = \zeta_k - \bar \zeta_k$, where $\bar \zeta_k = \fint_{\pa^* E} \zeta_k \de \Ha^n$, as a test function in the second variation condition \eqref{eq:2ndVar}, use $|\la\nabla \bar d_F, \nu_E \ra| \leq 1$ and \eqref{eq:B-H},    and obtain
\beq \label{eq:curvbound-2}
\frac{1}{n}\int_{\pa^* E} |H_E|^2 (\zeta_k - \bar \zeta_k)^2 \, \de \Ha^n\leq    \frac{1}{h} \int_{\pa^* E} (\zeta_k - \bar \zeta_k)^2 \, \de \Ha^n +  \int_{\pa^* E} |\nabla_\tau \zeta_k|^2 \, \de \Ha^n . 
\eeq
Since $\zeta_k = 0$ outside $B_{r_{k+1}}$,  one may estimate
\[
\int_{\pa^* E} |\nabla_\tau \zeta_k|^2 \, \de \Ha^n \leq \frac{4}{h} P(E, B_{r_{k+1}}).
\]
Moreover,  since $0 \leq \zeta_k \leq 1$ it holds 
\[
 \int_{\pa^* E} (\zeta_k - \bar \zeta_k)^2 \, \de \Ha^n  = \int_{\pa^* E} \zeta_k^2 - \bar \zeta_k^2 \, \de \Ha^n \leq \int_{\pa^* E} \zeta_k^2 \, \de \Ha^n \leq P(E, B_{r_{k+1}}).
\]
When $\Lambda$ is large enough, then for all $x \in \pa^* E \cap B_{r_{k+1}}$ and all $k \leq n+1$ it holds $\bar d_F(x) \geq \frac{\Lambda}{2}  \sqrt{h}$. Then one may deduce from   \eqref{eq:lagrangebound-1}  that  $P(E,  B_{r_{k+1}}) \leq \frac12 P(E)$ when $\Lambda$ is large enough.  This yields 
\[
0 \leq  \bar \zeta_k \leq \frac{1}{2}.
\]
Also by the Euler-Lagrange equation \eqref{eq:euler}, by  $\bar d_F(x) \geq \frac{\Lambda}{2}  \sqrt{h}$, and by Lemma \ref{lem:lagrangebound} it holds  $|H_E| \geq \frac{\Lambda}{4 \sqrt{h}} $ on $\pa^* E \cap B_{r_{k}}$, when $\Lambda$ is large. Therefore it holds
\[
\begin{split}
\frac{1}{n}\int_{\pa^* E} |H_E|^2 (\zeta_k - \bar \zeta_k)^2 \, \de \Ha^n &\geq \frac{1}{n}\int_{\pa^* E\cap B_{r_{k}}} |H_E|^2 (1 - \bar \zeta_k)^2 \, \de \Ha^n \\
&\geq  c_n \frac{\Lambda^2}{h} P(E, B_{r_{k}}).
\end{split}
\] 
Combining the three above estimates with \eqref{eq:curvbound-2} yields
\[
c_n \frac{\Lambda^2}{h}   P(E,  B_{r_{k}}) \leq  \frac{5}{h} P(E, B_{r_{k+1}}). 
\]
For $\Lambda$ large enough this implies
\beq \label{eq:curvbound-3}
\Lambda   P(E,  B_{r_{k}}) \leq  P(E,  B_{r_{k+1}}). 
\eeq

Use  \eqref{eq:curvbound-3} $(n+1)$-times  from $k =0$ to $k = n$,  use then \eqref{eq:curvbound-10} and recall  that $r_0 = \frac{\sqrt{h}}{2\Lambda}$ and obtain finally that  
\beq \label{eq:curvbound-4}
 P(E, B_{r_{n+1}}) \geq \Lambda^{n+1}  P(E,  B_{r_{0}}) \geq c \Lambda^{n+1} r_0^n  = c \Lambda^{n+1} \left( \frac{\sqrt{h}}{2\Lambda}\right)^n = c \Lambda h^{n/2}.
\eeq
But now since $r_{n+1} = (n+1)\sqrt{h} + r_0 \leq 2(n+1)\sqrt{h} $, one obtains from  Lemma \ref{lem:density2} with  $r = 2(n+1)\sqrt{h}$ that 
\[
 P(E, B_{r_{n+1}}) \leq   P(E, B_{r})  \leq C  h^{n/2},
\]
which contradicts \eqref{eq:curvbound-4} when $\Lambda$ is large.  
\end{proof}

\section{Existence of the flat flow}

Now that the density estimates are proven the proof of Theorem \ref{thm1} follows from the arguments from  \cite{LS, MSS} without  major changes. 
In this section I  consider the approximative flat flow $(E_t^h)_{t \geq 0}$ and the associated sequence   $(E_k^h)_{k \geq 0}$ as in the Definition \ref{def:flatflow} starting from an open and bounded set of finite perimeter $E_0$. The proof for the  following ''interpolation'' result can be found in \cite[Lemma 1.5]{LS}. 
\begin{lemma}
\label{lem:interpol}
Let  $(E_t^h)_{t \geq 0}$ be an  approximative flat flow starting from $E_0$ and  fix $h \in (0,1)$ and $t >h$.  Then for all $l \leq \sqrt{h}$ it holds
\[
|E_t^h \Delta E_{t-h}^h| \leq C \left( l \, P(E_{t-h}^h) + \frac{1}{l} \int_{E_{t}^h \Delta E_{t-h}^h} | \bar d_{E_{t-h}^h} | \, dx  \right)
\]
\end{lemma}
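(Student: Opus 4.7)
The plan is to split the symmetric difference according to the distance from $\pa E_{t-h}^h$ measured by $\bar d_{E_{t-h}^h}$. Set $A_l := \{x \in \R^{n+1} : |\bar d_{E_{t-h}^h}(x)| \leq l\}$ and decompose
\[
E_t^h \Delta E_{t-h}^h = \big((E_t^h \Delta E_{t-h}^h) \cap A_l\big) \cup \big((E_t^h \Delta E_{t-h}^h) \setminus A_l\big).
\]
On the far piece the signed distance exceeds $l$ in absolute value, so Chebyshev's inequality immediately produces the second term of the claim:
\[
|(E_t^h \Delta E_{t-h}^h) \setminus A_l| \leq \frac{1}{l}\int_{E_t^h \Delta E_{t-h}^h} |\bar d_{E_{t-h}^h}| \dx.
\]

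The near piece is controlled by the tube measure $|A_l|$, and the main step is the uniform bound $|A_l| \leq C \, l \, P(E_{t-h}^h)$. Since $t > h$, the set $E_{t-h}^h$ coincides with $E_k^h$ for some $k \geq 0$ and is therefore a minimizer of the scheme; Proposition \ref{prop:density} applies to it and yields $P(E_{t-h}^h, B_r(x)) \geq c \, r^n$ for every $x \in \pa E_{t-h}^h$ and every $r \leq C_0 \sqrt{h}$. The hypothesis $l \leq \sqrt{h}$ places $l$ in this admissible range. A Vitali-type selection then provides disjoint balls $\{B_{l/5}(x_i)\}_{i=1}^N$ centered on $\pa E_{t-h}^h$ whose fivefold enlargements cover $\pa E_{t-h}^h$; disjointness together with the lower density bound gives
\[
N \cdot c \, l^n \leq \sum_{i=1}^N P(E_{t-h}^h, B_{l/5}(x_i)) \leq P(E_{t-h}^h),
\]
hence $N \leq C P(E_{t-h}^h)/l^n$. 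Every point of $A_l$ lies within distance $l$ of some $x_i$, so $A_l \subset \bigcup_i B_{2l}(x_i)$ and summing volumes yields $|A_l| \leq C N l^{n+1} \leq C \, l \, P(E_{t-h}^h)$.

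The only genuine obstacle is the matching of scales: the covering argument uses balls of radius $l$, while the density estimate of Proposition \ref{prop:density} is effective only on balls of radius at most $C_0 \sqrt{h}$. This is precisely what the hypothesis $l \leq \sqrt{h}$ ensures, and beyond the Chebyshev/Vitali split no new ingredient is needed.
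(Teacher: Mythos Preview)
Your decomposition into a far piece (handled by Chebyshev) and a near piece (the tube $A_l$) is exactly the approach of \cite[Lemma~1.5]{LS}, and the Vitali/density argument you give for the tube bound $|A_l|\le Cl\,P(E_{t-h}^h)$ is correct whenever Proposition~\ref{prop:density} applies to $E_{t-h}^h$.

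There is, however, a genuine gap at the first step of the flow. You write that ``since $t>h$, the set $E_{t-h}^h$ coincides with $E_k^h$ for some $k\ge 0$ and is therefore a minimizer of the scheme''. For $t\in(h,2h)$ one has $k=0$, and $E_0^h=E_0$ is \emph{not} a minimizer of anything: it is merely the given open bounded set of finite perimeter. Proposition~\ref{prop:density} does not apply to it, and the lower perimeter density bound $P(E_0,B_r(x))\ge c\,r^n$ can fail badly. For instance, if $E_0=B_1\setminus K$ with $K$ a compact null set of Minkowski dimension strictly larger than $n$, then $P(E_0)=P(B_1)$ while the tube $\{|\bar d_{E_0}|\le l\}$ contains the $l$-neighbourhood of $K$, whose volume decays slower than $l$; hence $|A_l|\le Cl\,P(E_0)$ is false for small $l$ and your covering argument breaks down.

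So the proof as written does not cover the case $E_{t-h}^h=E_0$. The paper sidesteps a self-contained argument here by citing \cite[Lemma~1.5]{LS} directly; if you want a proof in the present generality you must either add a mild regularity hypothesis on $E_0$ (for example, upper Minkowski content of $\partial E_0$ comparable to $P(E_0)$, which already covers all $E_0$ with Ahlfors-regular boundary), or handle the first step separately with a constant depending on $E_0$.
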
   

By the regularity result stated in Proposition \ref{prop:regularity}  the Euler-Lagrange equation
\begin{equation}\label{eq:euler-app}
\frac{\bar d_{E_{t-h}^h}}{h} = - H_{E_t^h} + \lambda_{t,h}
\end{equation}
holds point wise on $\pa^* E_t^h$ and in a distributional sense on $\pa E_t^h$. Here $\lambda_{t,h}$ is the Lagrange multiplier. Using the minimality of $E_{t}^h$ against the previous set $E_{t-h}^h$ one obtains the important inequality 
\begin{equation}\label{eq:dissipation}
P(E_{t}^h) + \frac{1}{h}  \int_{E_{t}^h \Delta E_{t-h}^h} | \bar d_{E_{t-h}^h} | \, dx  \leq P(E_{t-h}^h). 
\end{equation}

Using \eqref{eq:dissipation}  and  the argument \cite[Lemma 2.1]{LS} (see also \cite[Lemma 3.6]{MSS}) one  obtains the  following dissipation inequality.
\begin{lemma}
\label{lem:dissipation}
Let  $(E_t^h)_{t \geq 0}$ be an  approximative flat flow starting from $E_0$ and  fix $h \in (0,1)$. Then for all $T_2 >T_1 \geq h$ it   holds
\[
\int_{T_1}^{T_2} \| H_{E_t^h} -\lambda_{t,h} \|_{L^2(\pa^* E_t^h)}^2 \, dt \leq C( P(E_{T_1-h}) - P(E_{T_2})).  
\]
Moreover, it holds 
\[
\int_{T_1}^{T_2} (\|H_{E_t^h}\|_{L^2(\pa^* E_t^h)}^2  + \lambda_{t,h}^2)  \, dt \leq C(1+ T_2 -T_1). 
\]
The constant depends on the dimension, $|E_0|$ and $P(E_0)$. 
\end{lemma}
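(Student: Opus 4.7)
The plan is to derive both estimates by combining the Euler--Lagrange equation \eqref{eq:euler-app}, the per-step dissipation \eqref{eq:dissipation}, and the sharp density/curvature bounds of \cref{prop:density}, following the strategy of \cite[Lemma~2.1]{LS} (cf.\ also \cite[Lemma~3.6]{MSS}) and adapting it to the presence of the Lagrange multiplier.

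For the first inequality, I would fix a single step with $E = E_k^h$ and $F = E_{k-1}^h$. Squaring \eqref{eq:euler-app} and integrating over $\partial^* E$ gives the identity
\[
h^2 \,\| H_E - \lambda_{t,h} \|_{L^2(\partial^* E)}^2 = \int_{\partial^* E} \bar d_F^2 \, \de \Ha^n.
\]
The key geometric step is the bound
\[
\int_{\partial^* E} \bar d_F^2 \, \de \Ha^n \leq C \int_{E \Delta F} |\bar d_F| \, \dx,
\]
which I plan to prove by parameterizing a neighbourhood of the regular part $\partial^* E$ via the nearest-point projection onto $\partial F$: along each normal segment from $y \in \partial^* E$ to its projection, of length $|\bar d_F(y)|$, the linearly-varying distance function integrates to $\bar d_F(y)^2/2$, and the scale-$\sqrt{h}$ density and curvature estimates of \cref{prop:density} guarantee that this foliation is well-defined with Jacobians uniformly comparable to $1$. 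The singular set of dimension $\leq n-7$ contributes only a lower-order term. Combined with \eqref{eq:dissipation}, this yields the per-step bound $h\,\| H_E - \lambda_{t,h}\|_{L^2}^2 \leq C(P(F) - P(E))$, which telescopes under summation over the steps lying in $[T_1,T_2]$ to the first claim.

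For the second inequality, I would decompose $\|H_{E_t^h}\|_{L^2}^2 \leq 2 \|H_{E_t^h} - \lambda_{t,h}\|_{L^2}^2 + 2 \lambda_{t,h}^2\, P(E_t^h)$ and use the monotonicity $P(E_t^h) \leq P(E_0)$ implied by \eqref{eq:dissipation}. The remaining task is the bound $\int_{T_1}^{T_2} \lambda_{t,h}^2 \, dt \leq C(1 + T_2 - T_1)$. Averaging \eqref{eq:euler-app} over $\partial^* E_t^h$ yields
\[
\lambda_{t,h} = \bar H_{E_t^h} + \fint_{\partial^* E_t^h} \frac{\bar d_{E_{t-h}^h}}{h} \, \de \Ha^n,
\]
where $\bar H_{E_t^h}$ is the mean value of $H_{E_t^h}$. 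Via Cauchy--Schwarz and the uniform lower bound $P(E_t^h) \geq c |E_0|^{n/(n+1)}$ from isoperimetry, the second term has $L^2(dt)$-norm controlled by the first inequality already obtained. For the first term, the average mean curvature is bounded in $L^2(dt)$ using the uniform upper/lower perimeter bounds together with a first-variation/isoperimetric-stability argument forcing $\bar H_{E_t^h}$ to remain comparable to the mean curvature $n/r_0$ of the ball of volume $|E_0|$; this is what produces the $(T_2 - T_1)$-linear contribution.

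The main obstacle is the geometric inequality $\int_{\partial^* E} \bar d_F^2 \leq C \int_{E\Delta F} |\bar d_F|$: executing the normal foliation rigorously requires the scale-$\sqrt{h}$ regularity from \cref{prop:density} and a separate argument that the cut locus and the singular set contribute only lower-order error. A secondary difficulty is the time-averaged bound on the Lagrange multiplier, which is genuinely new relative to the penalized scheme of \cite{MSS}, where the analogue of $\lambda_{t,h}$ is a fixed constant and this issue does not arise.
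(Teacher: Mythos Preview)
Your overall architecture (prove the pointwise geometric inequality $\int_{\partial^* E}\bar d_F^2\,\de\Ha^n\le C\int_{E\Delta F}|\bar d_F|\,\dx$, combine with \eqref{eq:euler-app} and \eqref{eq:dissipation}, and telescope) is exactly what the paper does. However both of your substeps have real gaps, and in each case the paper takes a different route.

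\textbf{The geometric inequality.} The normal foliation you propose does not produce a subset of $E\Delta F$. If $y\in\partial^*E$ with $\bar d_F(y)>0$, the segment from $y$ to its nearest point on $\partial F$ lies in $\R^{n+1}\setminus F$, but there is no reason it stays inside $E$; as soon as it leaves $E$ the segment is in $\R^{n+1}\setminus(E\cup F)$, which is not part of $E\Delta F$, and you lose the lower bound for $\int_{E\Delta F}|\bar d_F|$. The issues you flag (cut locus, singular set, Jacobian) are secondary to this. The paper avoids the foliation entirely and instead runs a dyadic Besicovitch covering argument: for $x\in\partial E$ with $|\bar d_F(x)|\sim 2^l h$, the ball $B_{2^l h}(x)$ is entirely on one side of $\partial F$, so the volume density estimate of \cref{prop:density} gives $|E\Delta F\cap B_{2^l h}(x)|\ge c(2^l h)^{n+1}$ and hence $\int_{B_{2^l h}(x)\cap(E\Delta F)}|\bar d_F|\ge c(2^l h)^{n+2}$, while the perimeter upper bound gives $\int_{\partial^*E\cap B_{2^l h}(x)}\bar d_F^2\le C(2^l h)^{n+2}$. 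Covering $\partial E$ by such balls and summing over $l$ yields the inequality without any foliation.

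\textbf{The Lagrange multiplier bound.} Your plan to split $\lambda_{t,h}=\bar H_{E_t^h}+\fint\bar d_F/h$ and then bound $\bar H_{E_t^h}$ by an ``isoperimetric-stability argument'' is not a proof: there is no general bound on the \emph{average} mean curvature of a set of finite perimeter in terms of $|E|$ and $P(E)$ alone, and any attempt to control $\bar H$ via $\|H\|_{L^2}$ is circular here. The paper instead bounds $\lambda_{t,h}$ directly by testing the Euler--Lagrange equation against the position vector field (this is \cite[Lemma~2.4]{JN}): using $\int_{\partial^*E}H_E\langle x,\nu_E\rangle\,\de\Ha^n=nP(E)$ and $(n+1)|E|=\int_{\partial^*E}\langle x,\nu_E\rangle\,\de\Ha^n$ one obtains $|\lambda_{t,h}|\le C\bigl(1+\|H_{E_t^h}-\lambda_{t,h}\|_{L^1(\partial^*E_t^h)}\bigr)$, with constants depending only on $|E_0|$, $P(E_0)$ and a diameter bound. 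Squaring, using $P(E_t^h)\le P(E_0)$, and integrating in time against the first inequality then gives $\int_{T_1}^{T_2}(\|H_{E_t^h}\|_{L^2}^2+\lambda_{t,h}^2)\,dt\le C(1+T_2-T_1)$ immediately.
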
   

\begin{proof}
 I will only  sketch the proof. Let $(E_k^h)$ be the sequence of sets associated with $(E_t^h)_{t \geq 0}$.  For $l \in \mathbb{Z}$ with $2^l \leq  2C h^{-\frac12}$ set 
\[
K(l)  =\{ x \in \R^{n+1}: 2^l h < |\bar d_{E_{t-h}^h}| \leq 2^{l+1} h \}.
\]  
Here $C$ is such that $ |\bar d_{E_{t-h}^h}| \leq C h $ on $\pa E_{t}^h$. Proposition \ref{prop:density} yields that for every $x \in \pa E_{t}^h$
\[
|E_{t}^h \cap B_{2^l h}(x)| \geq c (2^l h)^{n+1} \quad \text{and} \quad \Ha^n(\pa E_{t}^h \cap B_{2^l h}(x)) \leq C (2^l h)^{n}.
\]
Therefore for all $x \in \pa E_{t}^h \cap K(l)$ it holds 
\[
\begin{split}
&\int_{B_{2^l h}(x) \cap  E_{t}^h \Delta E_{t-h}^h} |\bar d_{E_{t-h}^h}|\, dx \geq c  (2^l h)^{n+2} \qquad \text{and}\\
&\int_{B_{2^l h}(x) \cap \pa E_{t}^h} \bar d_{E_{t-h}^h}^2 \, d \Ha^n \leq C  (2^l h)^{n+2}.
\end{split}
\]
Combing these two yields 
\[
\int_{B_{2^l h}(x) \cap \pa E_{t}^h} \bar d_{E_{t-h}^h}^2 \, d \Ha^n \leq C \int_{B_{2^l h}(x) \cap  E_{t}^h \Delta E_{t-h}^h} |\bar d_{E_{t-h}^h}|\, dx.
\]
By applying Besicovitch covering theorem and summing over $l \in \mathbb{Z}$  (see \cite[Lemma 3.6]{MSS} for details) yields
\[
\int_{ \pa E_{t}^h} \bar d_{E_{t-h}^h}^2 \, d \Ha^n \leq \int_{  E_{t}^h \Delta E_{t-h}^h} |\bar d_{E_{t-h}^h}|\, dx
\]
which by combining with \eqref{eq:euler-app} and \eqref{eq:dissipation} implies 
\[
h \int_{ \pa E_{t}^h} (H_{E_t^h} -\lambda_{t,h} )^2 \, d \Ha^n \leq C (P(E_{t-h}^h) -P(E_{t}^h) ).
\] 
The first inequality then follows by iterating the above. 

By \cite[Lemma 2.4]{JN} it holds 
\[
|\lambda_{t,h} | \leq C(1 + \|H_{E_t^h} -\lambda_{t,h}\|_{L^1(\pa^* E_t^h)})
\]
for a constant that depends on the dimension and on $|E_0|$ and $P(E_0)$. Note that  then 
\[
\|H_{E_t^h}\|_{L^2}^2 + \lambda_{t,h}^2 \leq   C(1 + \|H_{E_t^h} -\lambda_{t,h}\|_{L^2}^2). 
\]
Therefore by the first inequality one obtains 
\[
\begin{split}
 \int_{T_1}^{T_2} (\|H_{E_t^h}\|_{L^2}^2 + \lambda_{t,h}^2) \, dt \leq C \int_{T_1}^{T_2} (1 + \|H_{E_t^h} -\lambda_{t,h}\|_{L^2}^2) \, dt \leq C(1+ T_2 -T_1). 
\end{split}
\]
\end{proof}

The third lemma we need is a quantitative bound on the diameter of the sets $(E_t^h)$, which is essentially the same as \cite[Lemma 3.8]{MSS}. 
\begin{lemma}
\label{lem:bounded}
Let  $(E_t^h)_{t \geq 0}$ be an  approximative flat flow starting from $E_0$ for  $h \in (0,1)$. Then for all $T>0$ there is $R_T$, which depends on $T$, on the dimension and on the diameter of the initial set $E_0$,  such that $E_t^h \subset B_{R_T}$ for all $t \leq T$.
\end{lemma}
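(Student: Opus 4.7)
The plan is to bound the circumradius $R_k := \sup\{|x| : x \in E_k^h\}$ by a one-step inequality relating $R_{k+1}$ to $R_k$ through the Lagrange multiplier, and then to sum using the integral bound on $\lambda_{t,h}$ from Lemma \ref{lem:dissipation}.

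\emph{One-step bound.} Assume $R_{k+1} > R_k$ and pick a regular outermost point $x^* \in \pa^* E_{k+1}^h$ with $|x^*| = R_{k+1}$. Since $E_{k+1}^h \subset \bar B_{R_{k+1}}(0)$ and $\pa^* E_{k+1}^h$ is tangent to $\pa B_{R_{k+1}}(0)$ at $x^*$ with common outer normal $x^*/R_{k+1}$, writing both boundaries as graphs over the shared tangent hyperplane and comparing shows $H_{E_{k+1}^h}(x^*) \geq n/R_{k+1} > 0$. On the other hand $E_k^h \subset \bar B_{R_k}$ and $x^* \notin E_k^h$, hence $\bar d_{E_k^h}(x^*) \geq R_{k+1} - R_k$. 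Plugging into the Euler--Lagrange equation \eqref{eq:euler-app} yields
\[
R_{k+1} - R_k \;\leq\; \bar d_{E_k^h}(x^*) \;=\; h\bigl(\lambda_{(k+1)h,h} - H_{E_{k+1}^h}(x^*)\bigr) \;\leq\; h\,\lambda_{(k+1)h,h}^{+},
\]
with $\lambda^{+} := \max\{\lambda,0\}$; the inequality is trivial when $R_{k+1}\leq R_k$.

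\emph{Summation.} Summing from $k=0$ to $k=N-1$ with $Nh \leq T$, applying Cauchy--Schwarz, and invoking the second estimate of Lemma \ref{lem:dissipation} gives
\[
R_N - R_0 \;\leq\; h\sum_{k=1}^{N} |\lambda_{kh,h}| \;\leq\; \sqrt{T}\Bigl(\int_0^T \lambda_{t,h}^2 \dt\Bigr)^{1/2} \;\leq\; C\sqrt{T(1+T)},
\]
where $C=C(n,|E_0|,P(E_0))$ is independent of $h$. Combined with $R_0 \leq \mathrm{diam}(E_0)$ this furnishes the claimed radius $R_T$.

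\emph{Main obstacle.} The delicate point is the existence of a regular outermost point for the touching-ball argument, since the points of $\pa E_{k+1}^h$ realizing $\sup |x|$ might in principle lie entirely in the singular set $\pa E_{k+1}^h\setminus \pa^* E_{k+1}^h$, whose Hausdorff dimension is at most $n-7$ by Proposition \ref{prop:regularity}. This is handled by approximation: $\pa^* E_{k+1}^h$ is relatively open and dense in $\pa E_{k+1}^h$, so for every $\eps > 0$ there is $x^*_\eps \in \pa^* E_{k+1}^h$ with $|x^*_\eps| \geq R_{k+1} - \eps$; a slightly translated and enlarged sphere tangent to $\pa^* E_{k+1}^h$ at $x^*_\eps$ from outside then serves as a local barrier, and one recovers the one-step bound with an additive error that vanishes as $\eps \to 0$. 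The continuity of $\bar d_{E_k^h}$ and the $C^{2,\alpha}$-regularity on $\pa^* E_{k+1}^h$ make this approximation straightforward.
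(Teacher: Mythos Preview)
Your argument is essentially the paper's own: define the circumradius, use the Euler--Lagrange equation at an outermost boundary point together with $H\geq 0$ there to get the one-step bound $r_t\leq r_{t-h}+h|\lambda_{t,h}|$, then sum and invoke Lemma~\ref{lem:dissipation}. The only cosmetic differences are that the paper uses the elementary inequality $|\lambda|\leq 1+\lambda^2$ (obtaining $R_T-R_0\leq C(1+T)$) where you use Cauchy--Schwarz (obtaining the slightly sharper $C\sqrt{T(1+T)}$), and that the paper simply cites \cite{MSS} for the touching-point argument while you spell it out.

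One caveat on your treatment of the singular set: the barrier you describe does not quite exist as stated. If $x^*_\eps\in\pa^*E_{k+1}^h$ has $|x^*_\eps|=R_{k+1}-\eps'$ with $\eps'>0$, then $x^*_\eps$ lies strictly inside $B_{R_{k+1}}$, and there is no ball tangent to $\pa^*E_{k+1}^h$ at $x^*_\eps$ that simultaneously contains all of $E_{k+1}^h\subset B_{R_{k+1}}$; hence no pointwise lower bound on $H$ at $x^*_\eps$ follows from containment alone. What does work is to choose $x^*_\eps$ to be a \emph{local} maximum of $|x|$ on the open $C^{2,\alpha}$-manifold $\pa^*E_{k+1}^h$ with value close to $R_{k+1}$ (such points exist by density and openness of the regular set): at a local maximum the outer normal is radial and the graph comparison with the tangent sphere $\pa B_{|x^*_\eps|}$ is purely local, giving $H_{E_{k+1}^h}(x^*_\eps)\geq n/|x^*_\eps|>0$, after which your limit $\eps\to 0$ goes through. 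Alternatively one may invoke the distributional form of \eqref{eq:euler} on all of $\pa E$ from Proposition~\ref{prop:regularity}. Either way the conclusion stands.
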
   
\begin{proof}
As in  \cite[Lemma 3.8]{MSS} define $r_t$ for all $t \leq T$ as
\[
r_t := \inf \{ r>0 : E_t^h \subset B_r\}. 
\]  
Arguing as in  \cite[Lemma 3.8]{MSS} one deduces that at the point $y \in \pa B_{r_t}\cap \pa E_t^h$ it holds $H_{E_t^h}(y)\geq 0$ and therefore by \eqref{eq:euler-app} 
\[
r_t \leq r_{t-h} + h|\lambda_{t,h}|.
\]
Iterating this and using Lemma \ref{lem:dissipation} yields 
\[
R_T - R_0 \leq \int_{0}^T |\lambda_{t,h}| \, dt \leq \int_{0}^T (1+  \lambda_{t,h}^2) \, dt \leq C(1+T).
\]

\end{proof}

\begin{proof}[\textbf{Proof of Theorem \ref{thm1}}]
Let  $(E_t^h)_{t \geq 0}$ be an  approximative flat flow starting from $E_0$ for  $h \in (0,1)$ and fix $T \geq 1$. Then by \eqref{eq:dissipation} it holds $P(E_t^h) \leq P(E_0)$ and by Lemma \ref{lem:bounded} it holds $E_t^h \subset B_{R_T}$ for all $t \leq T$.  I claim that for $ 0<t < s $ with $s-t \geq h$  it holds 
\begin{equation}
\label{eq:holder-in-time}
|E_t^h \Delta E_s^h| \leq C \sqrt{t-s}.
\end{equation}
Once \eqref{eq:holder-in-time} is obtained, then the convergence of a subsequence $E_t^{h_k} \to E_t$ in measure follows as in \cite{LS, MSS}. 

Let $j, k$ be such that $s \in [jh, (j+1)h)$ and $t \in [(j+k)h, (j+k+1)h)$. Then by applying Lemma \ref{lem:interpol} for $l = \frac{h}{\sqrt{s-t}}$ and by \eqref{eq:dissipation} one obtains 
\[
\begin{split}
|E_t^h \Delta E_s^h|  &\leq \sum_{m=j}^{j+k} |E_{mh}^h \Delta E_{(m+1)h}^h| \\
&\leq C \sum_{m=j}^{j+k}\left( \frac{h}{\sqrt{s-t}} P(E_{mh}^h) + \frac{\sqrt{s-t}}{h}  \int_{E_{(m+1)h}^h \Delta E_{mh}^h} | \bar d_{E_{mh}^h} | \, dx \right)\\
&\leq  C \sum_{m=j}^{j+k}\frac{h}{\sqrt{s-t}} P(E_0) + C \sqrt{s-t} \sum_{m=j}^{j+k} (P(E_{mh}^h) - P(E_{(m+1)h}^h))\\
&\leq  C\frac{kh}{\sqrt{s-t}} P(E_0) +  C \sqrt{s-t} P(E_0). 
\end{split}
\]
Since $kh \leq 2(s-t)$, one obtains \eqref{eq:holder-in-time}.

The proof of the consistency principle for $C^{1,1}$-regular initial sets follows using the  arguments in \cite{JN2}. The volume penalization is used only in  \cite[Lemma 3.2]{JN2}, but one may overcome this by using the lemma below. 
\end{proof}

\begin{lemma}
\label{lem:tekninen1}
Let $F \subset \R^{n+1}$ be an open and bounded set which satisfies  interior and exterior ball condition with radius $r_0>0$  and  let $E$  be  a minimizer of \eqref{def:minprob}. There are $\rho_0$ and $h_0$ with the property that if $G$ is a set of finite perimeter such that 
\[
G \Delta E \subset B_{\rho}(x) \cap \mathcal{N}_{C_0h}(\pa F), 
\]
for $\rho \leq \rho_0$ and $h \leq h_0$ where $\mathcal{N}_{C_0h}(\pa F) = \{ x : \dist(x, \pa F) <  C_0h \}$, then it holds 
\[
P(E) \leq P(G) + C\rho^{n+1}. 
\]
Above the constant depends on the dimension, on $r_0, C_0, |F|$ and $P(F)$. 
\end{lemma}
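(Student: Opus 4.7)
The plan is to follow the general strategy of \cite[Lemma 17.21]{Ma} and of Proposition \ref{coro:minima}, but to exploit the extra hypothesis $G\Delta E\subset \mathcal N_{C_0h}(\partial F)$ in order to upgrade the $\Lambda$-minimality constant from the $C/\sqrt h$ of Proposition \ref{coro:minima} to one independent of $h$. The key improvement is that the tubular bound $|B_\rho(x)\cap \mathcal N_{C_0h}(\partial F)|\le C\rho^n h$ (coming from the interior/exterior ball condition on $F$), combined with the pointwise bound $|\bar d_F|\le C_0h$ on $G\Delta E$, yields
\[
\frac{1}{h}\Bigl|\int_G\bar d_F\,dx-\int_E\bar d_F\,dx\Bigr|\le \frac{C_0h}{h}|G\Delta E|\le C\rho^{n+1},
\]
so the dissipation difference between $E$ and $G$ is already controlled by the desired right-hand side.

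What remains is to compensate for the possible volume discrepancy $V:=\bigl||G|-|E|\bigr|\le |G\Delta E|\le C\min(\rho^{n+1},\rho^n h)$. The plan is to construct a competitor $\tilde G$ with $|\tilde G|=|E|$ by modifying $G$ in a small set $A$ located outside $B_\rho(x)$ but close to $\partial F$: by Proposition \ref{prop:density}, $\partial E$ is within $C\sqrt h$ of $\partial F$, so for $h_0$ small one picks $w\in \partial F$ far from $B_\rho(x)$ at which $\partial E$ is a $C^{2,\alpha}$ graph over a patch of $\partial F$. Using the interior/exterior ball condition, choose $A\subset B_s(w)$ with $|A|=V$ and $s\sim V^{1/(n+1)}$ (for instance as an appropriate spherical cap of $B_s(w)$ straddling $\partial F$), and set $\tilde G:=G\cup A$ or $\tilde G:=G\setminus A$ according to the sign of $|G|-|E|$. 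Applying the minimality of $E$ to $\tilde G$ then gives
\[
P(E)\le P(G)+(P(\tilde G)-P(G))+\frac{1}{h}\Bigl|\int_{\tilde G}\bar d_F-\int_G\bar d_F\Bigr|+C\rho^{n+1}.
\]
Since $w\in \partial F$, we have $|\bar d_F|\le s$ on $A$, so the modification contributes a perimeter cost at most $Cs^n$ and a dissipation cost at most $CsV/h$.

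The main obstacle is absorbing these two correction terms into $C\rho^{n+1}$: with $V\le C\rho^n h$ and $s\sim V^{1/(n+1)}$ one obtains $s^n\le C(\rho^n h)^{n/(n+1)}$ and $sV/h\le C(\rho^n h)^{(n+2)/(n+1)}/h$, and both are bounded by $C\rho^{n+1}$ once $h_0$ is chosen small enough relative to $\rho_0$. The decisive point is to centre $A$ on $\partial F$ rather than in the deep interior or exterior of $F$, so as to keep $|\bar d_F|$ of order $s$ rather than of order $r_0$ on $A$; otherwise the dissipation cost would be of order $r_0V/h$, which is far too large to fit into $C\rho^{n+1}$.
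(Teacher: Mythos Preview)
Your first step is correct and is indeed the heart of the lemma: since $G\Delta E\subset\mathcal N_{C_0h}(\partial F)$ one has $|\bar d_F|\le C_0h$ there, so
\[
\frac{1}{h}\Bigl(\int_G\bar d_F-\int_E\bar d_F\Bigr)\le C_0\,|G\Delta E|\le C\rho^{n+1}.
\]

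The gap is in the volume--correction step. With $A\subset B_s(w)$, $|A|=V$ and $s\sim V^{1/(n+1)}$ you only get a perimeter cost of order $s^n\sim V^{n/(n+1)}$, and this is \emph{not} bounded by $C\rho^{n+1}$ uniformly in $\rho\le\rho_0$, $h\le h_0$. Indeed, using $V\le C\rho^n h$ you need
\[
(\rho^n h)^{n/(n+1)}\le C\rho^{n+1}\quad\Longleftrightarrow\quad h\le C\rho^{(2n+1)/n},
\]
which fails for small $\rho$ once $h$ is fixed. The dissipation term $sV/h$ has the same defect (it requires $h\lesssim\rho$). So ``$h_0$ small relative to $\rho_0$'' is not enough; the bound must hold for every $\rho\le\rho_0$.

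The point you dismissed is precisely the one the paper exploits. If, in the case $|G|<|E|$, you place the correction \emph{deep inside} $F$ (an interior ball $B_{r_0}(y)\subset F$, hence $B_{r_0/2}(y)\subset G$ by \eqref{eq:tekninen-1} for $h,\rho$ small) and slide it slightly to $B_{r_0/2}(z)$ with $|G\cup B_{r_0/2}(z)|=|E|$, then $B_{r_0/2}(z)\subset F$ and therefore $\bar d_F<0$ on $B_{r_0/2}(z)\setminus G$. The contribution
\[
\frac{1}{h}\int_{B_{r_0/2}(z)\setminus G}\bar d_F\,dx
\]
is thus \emph{nonpositive} and can be dropped; you do not need to bound it in absolute value by $r_0V/h$. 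The Gr\"uter estimate then gives the perimeter cost
\[
P(\tilde G)-P(G)\le \frac{C}{r_0}\,|B_{r_0/2}(z)\setminus G|=\frac{C}{r_0}\,V\le C\rho^{n+1},
\]
which is linear in $V$ rather than $V^{n/(n+1)}$, and the proof closes. The case $|G|>|E|$ is symmetric, using the exterior ball condition.
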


\begin{proof}
By approximation one may assume $G$ to be smooth. Since $F$ satisfies interior and exterior ball condition, then by \cite[Lemma 3.1]{JN2} it holds
\begin{equation} \label{eq:tekninen-1}
\max_{x \in E \Delta F} \dist(x, \pa F) \leq C h 
\end{equation}
when $h \leq h_0$.  As in the proof of Proposition \ref{coro:minima} the set  $G$ does not have the same measure as $E$ and modifies it to $\tilde G$ with $|\tilde G|= |E|$.  Assume again that $|G|< |E|$. Since $F$ satisfies  interior ball condition with radius $r_0$, there is $y \in G$ such that $B_{r_0/2}(y) \subset G$. By continuity there is $z \in \R^{n+1}$  such that $|z - x| \geq 2 \rho_0$  and $|G \cup B_{r_0/2}(z)| = |E|$ when $\rho_0$ is small. Define $\tilde G = G \cup B_{r_0/2}(z)$. Then by the minimality of $E$ and by \eqref{eq:tekninen-1} and by the assumption  $G \Delta E \subset  B_{\rho}(x) \cap \mathcal{N}_{C_0h}(\pa F)$ it holds
\[
P(E) \leq P(\tilde G) + C|\tilde G \Delta E | \leq P(\tilde G) + C\rho^{n+1}. 
\]
Arguing as in  \cite{Gru}   one then deduces
\[
\begin{split}
P(\tilde G)  - P(G) &\leq \Ha^n(\pa  B_{r_0/2}(z) \setminus G) - \Ha^n(\pa G \cap  B_{r_0/2}(z) ) \\
&\leq  \frac{2(n+1)|B_1|}{r_0} |B_{r_0/2}(z) \setminus G | \\
&\leq C |\tilde G \Delta E | \leq C \rho^{n+1}. 
\end{split}
\]
and the claim follows. 
\end{proof}

The paper concludes with   Theorem \ref{thm2}. To this aim  I recall the definition of a distributional solution of \eqref{the flow} from \cite{LS}. 
\begin{definition}
\label{def:distributional}
Family of sets of finite perimeter $(E_t)_{t \geq 0}$ is a distributional solution to \eqref{the flow}  starting from $E_0 \subset \R^{n+1}$ if the following holds:
\begin{enumerate}
\item for almost every $t >0$ the set $E_t $ has mean curvature $H_{E_t}$ in a distributional sense and for every $T>0$
\[
\int_{0}^T \|H_{E_t}\|_{L^2(\pa^* E_t)}^2 \, dt  <\infty.
\]

\item There exists $v : \R^{n+1} \times (0,\infty) \to \R$ with $v \in L^2(0,T; L^2(\pa^* E_t))$ such that for every $\phi \in C_0^1(\R^{n+1} \times [0,\infty))$ it holds 
\[ 
\begin{split}
&- \int_0^T  \int_{\pa^* E_t} v \phi \, d \Ha^n \, dt = \int_0^T  \int_{\pa^* E_t}  (H_{E_t} - \bar H_{E_t}) \phi\, d \Ha^n \, dt ,\\
& \int_0^T  \int_{E_t} \pa_t \phi \, dx dt+ \int_{E_0} \phi(\cdot,0)\, dx = - \int_0^T  \int_{\pa^* E_t} v \phi \, d \Ha^n \, dt. 
\end{split}
\]
\end{enumerate}
\end{definition}

\begin{proof}[\textbf{Proof of Theorem \ref{thm2}}]

The proof is exactly the same as \cite[Theorem 2.3]{MSS}. Note that Proposition \ref{coro:minima}  implies that the sets $E_t^h$ are $(Ch^{-1/2}, c\sqrt{h})$-minimizers of the perimeter, i.e., for every $F$ with $F\Delta E_t^h \subset B_{c\sqrt{h}}(x_0)$ it holds
\begin{equation} \label{eq:tekninen-2}
P(E_t^h) \leq P(F) + \frac{C}{\sqrt{h}}| E_t^h \Delta F | .
\end{equation}
\end{proof}

\section*{Acknowledgments}
The author is supported by the Academy of Finland grants  314227 and 347550.

\end{document}